\documentclass[10pt]{article}

\hoffset=-2.cm \voffset=-2.5cm \setlength{\textwidth} {16.cm}
\setlength{\textheight} {23.cm}

\usepackage{amsfonts} \addtolength{\headheight}{2pt}
\usepackage{amsmath,amssymb,amsthm,hhline,latexsym,mathrsfs}
\usepackage{relsize}
\usepackage{enumitem}

\DeclareMathAlphabet\mathbfcal{OMS}{cmsy}{b}{n}

\usepackage{hyperref}

\usepackage{color}
\usepackage{graphicx}
\usepackage{comment}

\newtheorem{definition}{Definition}[section]
\newtheorem{theorem}{Theorem}
\newtheorem{proposition}{Proposition}
\newtheorem{lemma}{Lemma}
\newtheorem{step}{Step}

\theoremstyle{definition}\newtheorem{remark}{Remark}

\newcommand{\R}{\mathbb{R}}
\newcommand{\N}{\mathbb{N}}
\newcommand{\A}{\mathcal{A}}

\renewcommand{\H}{\mathcal{H}}

\newcommand{\ph}{\varphi}

\renewcommand{\leq}{\leqslant}
\renewcommand{\geq}{\geqslant}

\begin{document}

\title{Constructive exact control of semilinear 1D wave equations by a least-squares approach}

\author{
\textsc{Arnaud M\"unch}\thanks{Laboratoire de Math\'ematiques Blaise Pascal, Universit\'e Clermont Auvergne, 
UMR CNRS 6620, Campus universitaire des C\'ezeaux, 3, place Vasarely, 63178, Aubi\`ere, France. E-mail: {\tt arnaud.munch@uca.fr.}}
\and
\textsc{Emmanuel Tr\'elat}\thanks{Sorbonne Universit\'e, CNRS, Universit\'e de Paris, Inria, Laboratoire Jacques-Louis Lions (LJLL), F-75005 Paris, France. E-mail: {\tt emmanuel.trelat@sorbonne-universite.fr.}}
}

\date{}
\maketitle

\begin{abstract}
It has been proved by Zuazua in the nineties that the internally controlled semilinear 1D wave equation $\partial_{tt}y-\partial_{xx}y + g(y)=f  1_{\omega}$, with Dirichlet boundary conditions, is exactly controllable in $H^1_0(0,1)\cap L^2(0,1)$ with controls $f\in L^2((0,1)\times(0,T))$, for any $T>0$ and any nonempty open subset $\omega$ of $(0,1)$, assuming that $g\in \mathcal{C}^1(\R)$ does not grow faster than $\beta\vert x\vert \ln^{2}\vert x\vert$ at infinity for some $\beta>0$ small enough.
The proof, based on the Leray-Schauder fixed point theorem, is however not constructive. 
In this article, we design a constructive proof and algorithm for the exact controllability of semilinear 1D wave equations.
Assuming that $g^\prime$ does not grow faster than $\beta \ln^{2}\vert x\vert$ at infinity for some $\beta>0$ small enough and that $g^\prime$ is uniformly H\"older continuous on $\R$ with exponent $s\in[0,1]$, we design a least-squares algorithm yielding an explicit sequence converging to a controlled solution for the semilinear equation, at least with order $1+s$ after a finite number of iterations.
\end{abstract}

 \textbf{AMS Classifications:} 35Q30, 93E24.

\textbf{Keywords:} Semilinear wave equation, exact controllability, least-squares approach.

\section{Introduction}
Let $\Omega:=(0,1)$, let $\omega:=(\ell_1,\ell_2)$ with $0\leq \ell_1<\ell_2\leq 1$ and let $T>0$. We set $Q_T:=\Omega\times (0,T)$, $q_T:=\omega\times (0,T)$ and $\Sigma_T:=\partial\Omega\times (0,T)$.
We consider the semilinear 1D wave equation
\begin{equation}\label{eq:wave-NL}
\boxed{
\left\{
\begin{aligned}
& \partial_{tt}y - \partial_{xx}y +  g(y)= f 1_{\omega} & \textrm{in}\  Q_T,\\
& y=0 & \textrm{on}\  \Sigma_T, \\
& (y(\cdot,0),\partial_t y(\cdot,0))=(u_0,u_1) & \textrm{in}\  \Omega,
\end{aligned}
\right.
}
\end{equation}
where $(u_0,u_1)\in \boldsymbol{V}:=H^1_0(\Omega)\times L^2(\Omega)$ is the initial state of $y$ and $f\in L^2(q_T)$ is a {\it control} function. Here and throughout the paper, $g:\R\to \R$ is a function of class $\mathcal{C}^1$ such that $\vert g(x)\vert\leq C (1+\vert x\vert)\ln^2(2+\vert x\vert)$ for every $x\in\R$, for some $C>0$.
Then, \eqref{eq:wave-NL} has a unique global (weak) solution in $\mathcal{C}^0([0,T]; H_0^1(\Omega)) \cap \mathcal{C}^1([0,T]; L^2(\Omega))$ (see \cite{CazenaveHaraux1980}).

We say that \eqref{eq:wave-NL} is exactly controllable in time $T$ if, for any $(u_0,u_1)\in \boldsymbol{V}$ and $(z_0,z_1)\in \boldsymbol{V}$, there exists a control function $f\in L^2(q_T)$ such that the solution of \eqref{eq:wave-NL} satisfies $(y(\cdot,T),\partial_t y(\cdot,T))=(z_0,z_1)$. 
The exact controllability problem for \eqref{eq:wave-NL} has been addressed in \cite{zuazua93}.

\begin{theorem}\label{thm_Zuazua}\cite{zuazua93} 
Assume that $T>2\max(\ell_1,1-\ell_2)$. There exists $\bar\beta>0$ (only depending on $\Omega$ and $T$) such that, if
\begin{equation}\label{asymptotic_behavior}
\limsup_{\vert x\vert \to +\infty} \frac{\vert g(x)\vert }{\vert x\vert \ln^2\vert x\vert} < \bar\beta
\end{equation}
then \eqref{eq:wave-NL} is exactly controllable in time $T$.
\end{theorem}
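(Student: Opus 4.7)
The strategy is a Schauder-type fixed-point argument applied to a family of linear controllability problems obtained by freezing the state inside the nonlinearity. Writing $g(y) = g(0) + \tilde g(y)\, y$ with $\tilde g(y)=(g(y)-g(0))/y$ for $y\neq 0$ and $\tilde g(0)=g'(0)$ (a continuous function on $\R$), the growth assumption \eqref{asymptotic_behavior} directly yields the pointwise estimate $|\tilde g(y)|\leq C + \bar\beta \ln^2(2+|y|)$ on $\R$.

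Given $z\in L^\infty(Q_T)$, I would set $a_z:=\tilde g(z)\in L^\infty(Q_T)$ and solve the linear controllability problem
\[
\partial_{tt} y-\partial_{xx} y + a_z\, y = f\,1_\omega - g(0)\ \text{in}\ Q_T,\quad y|_{\Sigma_T}=0,\quad (y,\partial_t y)|_{t=0}=(u_0,u_1),
\]
with target $(z_0,z_1)$ at time $T$, using the Hilbert Uniqueness Method to produce the minimal $L^2(q_T)$-norm control $f=F(z)$ and the associated state $y=\Lambda(z)$. Exact controllability of this frozen equation is equivalent to an observability inequality for the adjoint state $\varphi$, solution of $\partial_{tt}\varphi-\partial_{xx}\varphi+a_z\varphi=0$, namely
\[
\|(\varphi(\cdot,0),\partial_t\varphi(\cdot,0))\|_{L^2\times H^{-1}}^2 \leq K(\|a_z\|_\infty)\int_{q_T}|\varphi|^2\,dx\,dt,
\]
and the core quantitative step is to establish the sharp bound $K(\rho)\leq C_0\exp\bigl(C_1\rho^{1/2}\bigr)$, with $C_0,C_1$ depending only on $T$ and $\omega$. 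In one space dimension this can be obtained by a sideways energy/multiplier argument based on finite speed of propagation, or equivalently via a global Carleman estimate; the hypothesis $T>2\max(\ell_1,1-\ell_2)$ enters precisely here, to ensure that backward characteristics from $q_T$ cover $\Omega\times\{0\}$.

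Combining this observability bound with standard energy estimates for linear wave equations with $L^\infty$ potential yields
\[
\|\Lambda(z)\|_{L^\infty(Q_T)}+\|F(z)\|_{L^2(q_T)} \leq C_0 e^{C_1\|a_z\|_\infty^{1/2}}\bigl(\|(u_0,u_1)\|_{\boldsymbol V}+\|(z_0,z_1)\|_{\boldsymbol V}+|g(0)|\bigr),
\]
after using the 1D Sobolev embedding $H^1_0(\Omega)\hookrightarrow L^\infty(\Omega)$ to pass from the energy norm to $L^\infty(Q_T)$. Since $\|a_z\|_\infty^{1/2}\leq C'+\sqrt{\bar\beta}\,\ln(2+\|z\|_{L^\infty(Q_T)})$, this simplifies to
\[
\|\Lambda(z)\|_{L^\infty(Q_T)}\leq M\bigl(2+\|z\|_{L^\infty(Q_T)}\bigr)^{C_1\sqrt{\bar\beta}},
\]
with $M$ depending only on the data. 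Choosing $\bar\beta$ so that $C_1\sqrt{\bar\beta}<1$ makes $\Lambda$ map some closed ball $B_R\subset L^\infty(Q_T)$ into itself; continuity of $\Lambda$ on $B_R$ comes from continuous dependence of HUM controls on the $L^\infty$ potential, and $\Lambda(B_R)$ is relatively compact in $L^\infty(Q_T)$ by 1D Aubin--Lions compactness. Schauder's (equivalently Leray--Schauder's) fixed-point theorem then delivers $z\in B_R$ with $\Lambda(z)=z$, and $f=F(z)$ drives \eqref{eq:wave-NL} from $(u_0,u_1)$ to $(z_0,z_1)$.

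The main obstacle is the sharp observability inequality with square-root dependence $K(\rho)\leq C_0 e^{C_1\rho^{1/2}}$: it is precisely this exponent $1/2$, paired with the $\ln^2$ growth of $g$, that yields a sub-unit power of $\|z\|_{L^\infty(Q_T)}$ in the a priori bound and pins down the smallness threshold $\bar\beta$; any cruder (e.g.\ linear) dependence on $\|a\|_\infty$ would not allow the fixed-point argument to close.
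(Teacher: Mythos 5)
Your proposal follows essentially the same route as the proof the paper attributes to \cite{zuazua93} and sketches in its introduction and in Appendix \ref{sec:linearizedwave}: freeze the potential $\widehat{g}(\xi)$, build the minimal $L^2(q_T)$-norm HUM control with the observability constant $Ce^{C\sqrt{\Vert A\Vert_\infty}}$ (Lemma \ref{controllability_result}), exploit the $\ln^2$ growth to get a sub-unit power $(1+\Vert\xi\Vert_\infty)^{C\sqrt{\beta}}$ so that the map preserves a ball $B_\infty(0,M)$, and conclude by a Leray--Schauder/Schauder fixed-point argument using compactness and continuous dependence of the control on the potential. The key quantitative ingredient you isolate --- the square-root dependence of the observability constant on $\Vert A\Vert_\infty$ paired with the $\ln^2$ growth --- is exactly the mechanism the paper identifies, so the proposal is correct and not a genuinely different argument.
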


Moreover, it is proved in \cite{zuazua93} that, if $g$ behaves like $-s\ln^p(\vert s\vert)$ with $p>2$ as $\vert s\vert\rightarrow+\infty$, then the system is not exactly controllable in any time $T>0$, due to an uncontrollable blow-up phenomenon. 
Theorem \ref{thm_Zuazua} has been improved in \cite{Cannarsa_Loreti_Komornik2002}, weakening the condition \eqref{asymptotic_behavior} into 
$$
\limsup_{\vert x\vert\to +\infty} \ \Big\vert \int_0^x g(r)\, dr \Big\vert \ \Big(\vert x\vert \prod_{k=1}^{+\infty} \ln^{[k]}(e_k+x^2)\Big)^{-2}<+\infty
$$
where $\ln^{[k]}$ denotes the $k^{\textrm{th}}$ iterate of $\ln$ and $e_k>0$ is such that $\ln^{[k]}(e_k)=1$. This growth condition is essentially optimal since the solution of \eqref{eq:wave-NL} may blow up whenever $g$ grows faster at infinity and has the bad sign. 
The multi-dimensional case in which $\Omega$  is a bounded domain of $\R^d$, $d>1$, with a $\mathcal{C}^{1,1}$ boundary has been addressed in \cite{Li_Zhang_2000}. Assuming that the support $\omega$ of the control function is a neighborhood of $\partial\Omega$ and that $T>\textrm{diam}(\Omega\backslash\omega)$, the exact controllability of (\ref{eq:wave-NL}) is proved under the growth condition 
$\limsup_{\vert x\vert \to +\infty} \frac{\vert g(x)\vert }{\vert x\vert\ln^{1/2}\vert x\vert} < +\infty$. For control domains $\omega$ satisfying the classical multiplier assumption (see \cite{JLL88}), exact controllability has been proved in \cite{Zhang_2000} assuming that $g$ is globally Lipschitz continuous. We also mention \cite{coron-trelat-wave-06} where a positive boundary controllability result is proved for steady-state initial and final data and for $T$ large enough by a quasi-static deformation approach.

The proof given in \cite{zuazua93} is based on a Leray Schauder fixed point argument introduced in \cite{zuazua90,zuazua91} that reduces the exact controllability problem to obtaining suitable \textit{a priori} estimates for a linearized wave equation with a potential. More precisely, it is shown that the operator $K:L^\infty(Q_T)\to L^\infty(Q_T)$, where $y_\xi:=K(\xi)$ is a controlled solution with the control function $f_{\xi}$ of the linear boundary value problem 
\begin{equation}
\label{NL_z}
\left\{
\begin{aligned}
& \partial_{tt}y_{\xi} - \partial_{xx}y_{\xi} +  y_\xi \,\widehat{g}(\xi)= -g(0)+f_\xi 1_{\omega} &\textrm{in}\  Q_T,\\
& y_\xi=0 &\textrm{on}\  \Sigma_T, \\
& (y_\xi(\cdot,0),\partial_t y_{\xi}(\cdot,0))=(u_0,u_1) &\textrm{in}\  \Omega,
\end{aligned}
\right.
\qquad
\widehat{g}(x):=
\left\{ 
\begin{aligned}
& \frac{g(x)-g(0)}{x} & \textrm{if}\ x\neq 0\\
& g^{\prime}(0) & \textrm{if}\ x=0
\end{aligned}
\right.
\end{equation}
satisfying $(y_{\xi}(\cdot,T),\partial_t y_{\xi}(\cdot,T))=(z_0,z_1)$ has a fixed point. The control $f_{\xi}$ in \cite{zuazua93} is the one of minimal $L^2(q_T)$ norm. The existence of a fixed point for the operator $K$ is proved by applying the Leray-Schauder degree theorem: it is shown that if $\beta$ is small enough, then there exists $M=M(\Vert (u_0,u_1)\Vert_{\boldsymbol{V}}, \Vert (z_0,z_1)\Vert_{\boldsymbol{V}})>0$ such that $K$ maps the ball $B_\infty(0,M)$ to itself.  

The objective of this article is to design an algorithm providing an explicit sequence $(f_k)_{k\in \N}$ that converges strongly to an exact control for \eqref{eq:wave-NL}. A first idea that comes to mind is to consider the Picard iterations $(y_k)_{k\in\N}$ associated with the operator $K$ defined by $y_{k+1}=K(y_k)$, $k\geq 0$ initialized with any element $y_0\in L^{\infty}(Q_T)$. The resulting sequence of controls $(f_k)_{k\in \N}$ is then so that $f_{k+1}\in L^2(q_T)$ is the control of minimal $L^2(q_T)$ norm for $y_{k+1}$ solution of 
\begin{equation}
\label{NL_z_k}
\left\{
\begin{aligned}
& \partial_{tt}y_{k+1} - \partial_{xx}y_{k+1} +  y_{k+1} \,\widehat{g}(y_k)= -g(0)+f_{k+1} 1_{\omega} & \textrm{in}\quad Q_T,\\
& y_{k+1}=0 & \textrm{on}\  \Sigma_T, \\
& (y_{k+1}(\cdot,0),\partial_t y_{k+1}(\cdot,0))=(u_0,u_1) & \textrm{in}\  \Omega.
\end{aligned}
\right. 
\end{equation}
Such a strategy usually fails since the operator $K$ is in general not contracting, even if $g$ is globally Lipschitz. We refer to \cite{EFC-AM-2012} for numerical simulations providing evidence of the lack of convergence in parabolic cases (see also Remark \ref{rem_contract} in Appendix \ref{sec:linearizedwave}).
%
%

A second idea is to use a Newton type method in order to find a zero of the $\mathcal{C}^1$ mapping $\widetilde{F}: Y \to W$ defined by 
\begin{equation}\label{def-F}
\widetilde{F}(y,f):= \big(\partial_{tt}y - \partial_{xx}y  + g(y) - f 1_{\omega}, y(\cdot\,,0) - u_0, \partial_t y(\cdot\,,0) - u_1, y(\cdot\,,T) -z_0, \partial_t y(\cdot\,,T) - z_1\big) 
\end{equation}
for some appropriate Hilbert spaces $Y$ and $W$ (see further):
given $(y_0,f_0)$ in $Y$, the sequence $(y_k,f_k)_{k\in \N}$ is defined iteratively by $(y_{k+1},f_{k+1})=(y_k,f_k)-(Y_k,F_k)$ where $F_k$ is a control for $Y_k$ solution of  
\begin{equation}
\label{Newton-nn}
\left\{
\begin{aligned}
&\partial_{tt}Y_k-\partial_{xx}Y_k +  g^{\prime}(y_k)\,Y_{k} = F_{k} 1_{\omega} + \partial_{tt}y_k - \partial_{xx}y_k  + g(y_k) - f_k 1_{\omega},
                                                                                    & \quad  \textrm{in}\ Q_T,\\
& Y_k=0,                                                              & \quad  \textrm{on}\ \Sigma_T,\\ 
& Y_k(\cdot,0)=u_0-y_k(\cdot,0), \  \partial_t Y_k(\cdot,0)=u_1-\partial_t y_k(\cdot,0)                                   & \quad  \textrm{in}\ \Omega ,
\end{aligned}
\right.
   \end{equation}
   such that $Y_k(\cdot,T)=-y_k(\cdot,T)$ and $\partial_t Y_k(\cdot,T)=-\partial_t y_k(\cdot,T)$ in $\Omega$.
This linearization makes appear an operator $K_N$ such that $y_{k+1}=K_N(y_k)$ involving the first derivative of $g$. 
 However, as it is well known, such a sequence may fail to converge if the initial guess $(y_0,f_0)$ is not close enough to a zero of $F$ (see \cite{EFC-AM-2012} where divergence of the sequence is shown for large data).
  
%
The controllability of nonlinear partial differential equations has attracted a large number of works in the last decades (see \cite{Coron-Book-07} and references therein).
However, as far as we know, few are concerned with the approximation of exact controls for nonlinear partial differential equations, and the construction of convergent control approximations for controllable nonlinear equations remains a challenge. 

In this article, given any initial data $(u_0,u_1)\in \boldsymbol{V}$, we design an algorithm providing a sequence $(f_k)_{k\in \N}$ converging to a controlled solution for \eqref{eq:wave-NL}, under assumptions on $g$ that are slightly stronger than the one done in Theorem \ref{thm_Zuazua}. Moreover, after a finite number of iterations, the convergence is super-linear. This is done by introducing a quadratic functional measuring how much a pair $(y,f) \in Y$ is close to a controlled solution for \eqref{eq:wave-NL} and then by determining a particular minimizing sequence enjoying the announced property. A natural example of an error (or least-squares) functional is given by $\widetilde{E}(y,f):=\frac{1}{2}\Vert \widetilde{F}(y,f)\Vert^2_W$ to be minimized over $Y$. Exact controllability for~(\ref{eq:wave-NL}) is reflected by the fact that the global minimum of the nonnegative functional $\widetilde{E}$ is zero, over all pairs $(y,f)\in Y$ solutions of \eqref{eq:wave-NL}. In the line of recent works on the Navier-Stokes system (see \cite{lemoine-Munch-Pedregal-AMO-20}), we determine, using an appropriate descent direction, a minimizing sequence $(y_k,f_k)_{k\geq 0}$ converging to a zero of the quadratic functional.
 
The paper is organized as follows.  In Section \ref{sec:LS}, we define the (nonconvex) least-squares functional $E$ and the corresponding (nonconvex) optimization problem \eqref{extremal_problem}. We show that $E$ is Gateaux-differentiable on $\mathcal{A}$ and that any critical point $(y,f)$ for $E$ such that $g^\prime(y)\in L^\infty(Q_T)$ is also a zero of $E$. This is done by introducing an adequate descent direction $(Y^1,F^1)$ for $E$ at any $(y,f)$ for which $E^\prime(y,f)\cdot (Y^1,F^1)$ is proportional to $\sqrt{E(y,f)}$. This instrumental fact compensates the failure of convexity of $E$ and is at the base of the global convergence properties of our least-squares algorithm. The design of this algorithm is done by determining a minimizing sequence based on $(Y^1,F^1)$, which is proved to converge to a controlled pair for the semilinear wave equation \eqref{eq:wave-NL}, in our main result (Theorem \ref{mainthm}), under appropriate assumptions on $g$. Moreover, we prove that, after a finite number of iterations, the convergence is super-linear.
Theorem \ref{mainthm} is proved in Section \ref{sec_proof_mainthm}.
We show in Section \ref{sec:remarks} that our least-squares approach coincides with the classical \emph{damped Newton method} applied to a mapping similar to $\widetilde{F}$, and we give a number of other comments. 
In Appendix \ref{sec:linearizedwave}, we state some \textit{a priori} estimates for the linearized wave equation with potential in $L^\infty(Q_T)$ and source term in $L^2(Q_T)$ and we show that the operator $K$ is contracting if $\Vert \hat{g}^\prime\Vert_{L_{\textrm{loc}}^\infty(\R)}$ is small enough.

As far as we know, the method introduced and analyzed in this work is the first one providing an explicit, algorithmic construction of exact controls for semilinear wave equations. 

\paragraph{Notations.}
Throughout, we denote by $\Vert \cdot \Vert_{\infty} $ the usual norm in $L^\infty(\R)$, by $(\cdot,\cdot)_X$ the scalar product of $X$ (if $X$ is a Hilbert space) and by $\langle \cdot, \cdot \rangle_{X,Y}$ the duality product between $X$ and $Y$.
The notation $\Vert \cdot\Vert_{2,q_T}$ stands for $\Vert \cdot\Vert_{L^2(q_T)}$ and $\Vert \cdot\Vert_p$ for $\Vert \cdot\Vert_{L^p(Q_T)}$, mainly for $p=2$ and $p=+\infty$.

Given any $s\in (0,1]$, we denote by $\mathcal{C}^{1,s}(\R)$ the set of all functions $g\in \mathcal{C}^1(\R)$ such that $g^\prime$ is uniformly H\"older continous with exponent $s$, meaning that
$$
[g']_s := \sup_{a,b\in \R \atop a\neq b} \frac{\vert g^\prime(a)-g^\prime(b)\vert}{\vert a-b\vert^s}  < +\infty .
$$
For $s=0$, by extension, we set $[g']_0 :=2\Vert g^\prime\Vert_\infty$.
In particular, $g\in \mathcal{C}^{1,0}(\R)$ if and only if $g\in \mathcal{C}^1(\R)$ and $g^\prime\in L^\infty(\R)$, and $g\in \mathcal{C}^{1,1}(\R)$ if and only if $g^\prime$ is Lipschitz continuous (in this case, $g^\prime$ is almost everywhere differentiable and $g^{\prime\prime}\in L^\infty(\R)$, and we have $[g^\prime]_s\leq \Vert g^{\prime\prime}\Vert_\infty$).

\section{Least-squares algorithm and main result}\label{sec:LS}

\subsection{Least-squares functional and minimization problem}\label{sec_lsq_method}
\paragraph{Least-squares functional.}
We consider the Hilbert space
\begin{multline*}
\H :=
\Big\{(y,f)\in L^2(Q_T)\times L^2(q_T) \ \mid\   \partial_{tt}y - \partial_{xx}y\in L^2(Q_T),  \ y=0 \ \textrm{on}\ \Sigma_T ,  \\
(y(\cdot,0),\partial_t y(\cdot,0)) \in \boldsymbol{V} , \ (y(\cdot,T),\partial_t y(\cdot,T))\in \boldsymbol{V} \Big\}
\end{multline*}
endowed with the scalar product
\begin{multline*}
((y_1,f_1),(y_2,f_2))_{\H} := (y_1,y_2)_2+ \big( (y_1(\cdot,0),\partial_t y_1(\cdot,0)),(y_2(\cdot,0),\partial_ty_2(\cdot,0)) \big)_{\boldsymbol{V}} \\
+ \big(\partial_{tt}y_1 - \partial_{xx}y_1,\partial_{tt}y_2-\partial_{xx}y_2 \big)_2+ (f_1,f_2)_{2,q_T} 
\end{multline*}
and the norm $\Vert (y,f)\Vert_{\H}:=\sqrt{((y,f),(y,f))_{\H}}$. 

In what follows, we fix some arbitrary $(u_0,u_1)\in \boldsymbol{V}$ and $(z_0,z_1)\in \boldsymbol{V}$.
The subspaces of $\H$ defined by
\begin{equation*}
\begin{split}
\A  & := \Big\{(y,f)\in \H \ \mid\  
(y(\cdot,0),\partial_t y(\cdot,0))=(u_0,u_1) , \ (y(\cdot,T),\partial_t y(\cdot,T))=(z_0,z_1)\ \textrm{in}\ \Omega \Big\} , \\
\A_0 & := \Big\{(y,f)\in \H \ \mid\ (y(\cdot,0),\partial_t y(\cdot,0))=(0,0),\ (y(\cdot,T),\partial_t y(\cdot,T))=(0,0)\ \textrm{in}\ \Omega \Big\}  ,
\end{split}
\end{equation*}
Note that $\mathcal{A}=(\overline{y},\overline{f})+\mathcal{A}_0$ for any $(\overline{y},\overline{f})\in \mathcal{A}$.

Given any $(y,f)\in\mathcal{A}$, it follows from the \textit{a priori} estimate for the linear 1D wave equation that there exists $C>0$, only depending on $\Omega$ and $T$, such that
\begin{equation}\label{estiminftyA}
\begin{split}
\Vert (y,\partial_t y)\Vert^2_{L^{\infty}(0,T; \boldsymbol{V})} &\leq C \Big( \Vert \partial_{tt}y-\partial_{xx}y\Vert^2_{L^2(Q_T)}+\Vert (u_0,u_1) \Vert^2_{\boldsymbol{V}}\Big) \\
\Vert y\Vert_\infty &\leq C\Vert (y,f)\Vert_\H
\end{split}
\end{equation}
in particular $y\in L^\infty(Q_T)$. 
Since $g$ is of class $\mathcal{C}^1$, we have $g(y)\in L^2(Q_T)$ and $g^\prime(y)\in L^\infty(Q_T)$.
We define the least-squares functional $E:\mathcal{A}\to \R$ by
$$
\boxed{
E(y,f):=\frac{1}{2}\big\Vert \partial_{tt}y-\partial_{xx}y + g(y)-f 1_{\omega}\big\Vert^2_{L^2(Q_T)} 
}
$$
for every $(y,f)\in\mathcal{A}$.

\paragraph{Least-squares minimization problem.}
For any fixed $(\overline{y},\overline{f})\in \mathcal{A}$, we consider the (nonconvex) minimization problem
\begin{equation}\label{extremal_problem}
\boxed{
\inf_{(y,f)\in \mathcal{A}_0} E(\overline{y}+y,f+\overline{f})
}
\end{equation}

In the framework of Theorem \ref{thm_Zuazua}, the infimum of the functional of $E$ is zero and is reached by at least one pair $(y,f)\in \mathcal{A}$, solution of  \eqref{eq:wave-NL} and satisfying $(y(\cdot,T),\partial_t y(\cdot,T))=(z_0,z_1)$. Conversely, any pair $(y,f)\in \mathcal{A}$ such that $E(y,f)=0$ is a solution of \eqref{eq:wave-NL}. In this sense, the functional  $E$ is an \emph{error functional} which measures the deviation of $(y,f)$ from being a solution of the underlying nonlinear equation.  

A classical algorithmic way for computing the minimum consists in following descent directions, along the gradient of the functional. In descent algorithms, local minima are a usual issue to face with, unless the functional $E$ is convex. Since \eqref{eq:wave-NL} is nonlinear, here $E$ fails to be convex in general. In spite of that, we are going to construct a minimizing sequence which always converges to a zero of $E$. 

\begin{definition}
Let $T>2\max(\ell_1,1-\ell_2)$ be arbitrary. 
Given any $(y,f)\in \mathcal{A}$, over all pairs $(Y^1,F^1)\in \mathcal{A}_0$ solutions (the next result shows that there do exist some solutions) of  
\begin{equation}\label{wave-Y1}
\left\{
\begin{aligned}
& \partial_{tt}Y^1 - \partial_{xx}Y^1 +  g^{\prime}(y)\cdot Y^1 = F^1 1_{\omega}+\big(\partial_{tt}y-\partial_{xx}y + g(y)-f 1_{\omega}\big) &\textrm{in}\  Q_T,\\
& Y^1=0 &\textrm{on}\  \Sigma_T, \\
& (Y^1(\cdot,0),\partial_t Y^1(\cdot,0))=(0,0) & \textrm{in}\  \Omega,
\end{aligned}
\right.
\end{equation}
we select the (unique) pair $(Y^1,F^1)\in \mathcal{A}_0$ such that the control $F^1$, which is a null control for $Y^1$, has a minimal $L^2(q_T)$ norm. In what follows, it is called \emph{the solution $(Y^1,F^1)\in \mathcal{A}_0$ of \eqref{wave-Y1} of minimal control norm}.
\end{definition}


In the result hereafter, given any $(y,f)\in\mathcal{A}$, we establish some properties of the pair $((y,f),(Y^1,F^1))\in\A\times\A_0$, where $(Y^1,F^1)$ is the solution of \eqref{wave-Y1} of minimal control norm, which are at the base of the least-squares algorithm that we propose in Section \ref{sec_algo}, and are useful in view of proving its convergence (see Theorem \ref{mainthm}).

\begin{proposition}\label{propo1}
Assume that $T>2\max(\ell_1,1-\ell_2)$. There exists a positive constant $C$, only depending on $\Omega$ and $T$, such that, given any $(y,f)\in\mathcal{A}$:
\begin{enumerate}[label=(\roman*)]
\item\label{estimateC1C2} There exist solutions of \eqref{wave-Y1}. Moreover, the solution $(Y^1,F^1)\in \mathcal{A}_0$ of \eqref{wave-Y1} of minimal control norm is unique and satisfies
\begin{equation} \label{estimateF1Y1}
\Vert (Y^1,\partial_t Y^1)\Vert_{L^{\infty}(0,T;\boldsymbol{V})}+ \Vert F^1\Vert_{2,q_T} \leq C  e^{C\sqrt{\Vert g^{\prime}(y)\Vert_{\infty}} }\sqrt{E(y,f)}
\end{equation}
and 
\begin{equation} \label{estimateF1Y1_A0}
\Vert (Y^1,F^1)\Vert_\H \leq C  e^{C\sqrt{\Vert g^{\prime}(y)\Vert_{\infty}} }\sqrt{E(y,f)} .
\end{equation}
In particular, $\Vert Y^1\Vert_{L^{\infty}(Q_T)} \leq C  e^{C\sqrt{\Vert g^{\prime}(y)\Vert_{\infty}} }\sqrt{E(y,f)}$.

\item\label{differentiabiliteE} The derivative of $E$ at $(y,f)\in \mathcal{A}$ along the direction $(Y^1,F^1)$ satisfies 
\begin{equation}\label{estimateEEprime}
E^{\prime}(y,f)\cdot (Y^1,F^1):= \lim_{\lambda\to 0 \atop \lambda\neq 0} \frac{E((y,f)+\lambda (Y^1,F^1))-E(y,f)}{\lambda} = 2E(y,f).
\end{equation}

\item\label{propo1_item3} Noting that the derivative $E^{\prime}(y,f)$ does not depend on $(Y,F)$ and defining the norm $\Vert E^{\prime}(y,f)\Vert_{\mathcal{A}_0^{\prime}}:= \displaystyle\sup_{(Y,F)\in \mathcal{A}_0\setminus\{0\}} \frac{E^{\prime}(y,f)\cdot (Y,F)}{\Vert (Y,F)\Vert_\H}$, where $\mathcal{A}_0^{\prime}$ is the topological dual of $\mathcal{A}_0$, we have
\begin{equation}\label{ineq_E_Eprime}
\frac{1}{\sqrt{2}\max\big(1,\Vert g^\prime(y)\Vert_\infty\big)}\Vert E^{\prime}(y,f)\Vert_{\mathcal{A}_0^\prime}\leq \sqrt{E(y,f)}\leq \frac{1}{\sqrt{2}}C e^{C\sqrt{\Vert g^{\prime}(y)\Vert_\infty}} \Vert E^{\prime}(y,f)\Vert_{\mathcal{A}_0^\prime} .
\end{equation}

\item\label{propo1_estimateE} Assume that $g\in \mathcal{C}^{1,s}(\R)$ for some $s\in [0,1]$. 
Then
\begin{equation}\label{estimateE}
E\big((y,f)-\lambda (Y^1,F^1)\big)\leq \Big(\vert 1-\lambda\vert +\lambda^{1+s}\,K(y) E(y,f)^{\frac{s}{2}}\Big)^2 \, E(y,f)   \qquad \forall \lambda\in \R
\end{equation}
where
\begin{equation}\label{estimateE_C}
K(y):= C\, [g^\prime]_s  \Big(C  e^{C\sqrt{\Vert g^{\prime}(y)\Vert_\infty}}\Big)^{1+s} .
\end{equation}
\end{enumerate}
\end{proposition}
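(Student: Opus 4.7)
\textbf{Plan of proof of Proposition \ref{propo1}.}

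My plan is to read \eqref{wave-Y1} as a null controllability problem for a linear wave equation with the bounded potential $q := g'(y) \in L^\infty(Q_T)$ and source term $B := \partial_{tt}y-\partial_{xx}y + g(y)-f\mathbf{1}_\omega \in L^2(Q_T)$, noting that $\|B\|_2^2 = 2E(y,f)$. For item \ref{estimateC1C2}, since $(y,f)\in\mathcal{A}$ and $(Y^1,F^1)\in\mathcal{A}_0$, the problem is to steer $(0,0)$ to $(0,0)$ against the inhomogeneity $B$. Under $T > 2\max(\ell_1,1-\ell_2)$, the observability inequality for the adjoint wave equation with $L^\infty$ potential (recalled in Appendix \ref{sec:linearizedwave}) is available with constant of order $Ce^{C\sqrt{\|q\|_\infty}}$, and the HUM construction yields a unique control $F^1$ of minimal $L^2(q_T)$ norm together with the bound $\|F^1\|_{2,q_T} \leq Ce^{C\sqrt{\|g'(y)\|_\infty}}\sqrt{E(y,f)}$. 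The energy estimate \eqref{estiminftyA} applied to \eqref{wave-Y1} then produces \eqref{estimateF1Y1}; writing $\partial_{tt}Y^1-\partial_{xx}Y^1 = F^1\mathbf{1}_\omega + B - g'(y)Y^1$ and plugging this back into the $\H$-norm yields \eqref{estimateF1Y1_A0}, and the $L^\infty$ bound on $Y^1$ then follows from \eqref{estiminftyA}.

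For item \ref{differentiabiliteE}, I would simply expand $E((y,f)+\lambda(Y,F))$ in $\lambda$. Because $g\in\mathcal{C}^1$ and $y\in L^\infty(Q_T)$, the chain rule gives the Gateaux derivative
\[
E'(y,f)\cdot(Y,F)\;=\;\big(B,\ \partial_{tt}Y-\partial_{xx}Y + g'(y)Y - F\mathbf{1}_\omega\big)_2 .
\]
Substituting $(Y,F)=(Y^1,F^1)$, the second factor is exactly $B$ by \eqref{wave-Y1}, hence $E'(y,f)\cdot(Y^1,F^1) = \|B\|_2^2 = 2E(y,f)$. Item \ref{propo1_item3} is then immediate: the lower bound on $\sqrt{E(y,f)}$ comes from testing $E'(y,f)$ against $(Y^1,F^1)/\|(Y^1,F^1)\|_\H$ and using \eqref{estimateF1Y1_A0}, while the upper bound follows from the explicit formula for $E'(y,f)\cdot(Y,F)$ by Cauchy--Schwarz and the trivial estimates $\|Y\|_2, \|\partial_{tt}Y-\partial_{xx}Y\|_2, \|F\|_{2,q_T} \leq \|(Y,F)\|_\H$, picking up a factor $\max(1,\|g'(y)\|_\infty)$ from the potential term.

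The key step is item \ref{propo1_estimateE}, which I expect to be the main technical obstacle. My plan is to write
\[
\partial_{tt}(y-\lambda Y^1) - \partial_{xx}(y-\lambda Y^1) + g(y-\lambda Y^1) - (f-\lambda F^1)\mathbf{1}_\omega \;=\; (1-\lambda)B + r_\lambda ,
\]
where, after using \eqref{wave-Y1} to substitute $\partial_{tt}Y^1-\partial_{xx}Y^1-F^1\mathbf{1}_\omega = B - g'(y)Y^1$, the remainder reads
\[
r_\lambda \;=\; g(y-\lambda Y^1) - g(y) + \lambda g'(y) Y^1 .
\]
Under $g\in\mathcal{C}^{1,s}(\R)$, writing $r_\lambda = \int_0^1 \big(g'(y) - g'(y-\sigma\lambda Y^1)\big)\,\lambda Y^1\, d\sigma$ and using the Hölder bound $|g'(a)-g'(b)|\leq [g']_s |a-b|^s$ produces the pointwise control $|r_\lambda| \leq \tfrac{[g']_s}{1+s}|\lambda|^{1+s}|Y^1|^{1+s}$ (with the extended convention handling $s=0$). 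From here, $\|r_\lambda\|_2 \lesssim [g']_s\, |\lambda|^{1+s}\, \|Y^1\|_\infty^{1+s}$ up to a factor $|Q_T|^{1/2}$, and inserting the $L^\infty$ bound on $Y^1$ from \ref{estimateC1C2} yields $\|r_\lambda\|_2 \leq \tfrac{1}{\sqrt{2}}\,K(y)\,|\lambda|^{1+s}\,E(y,f)^{(1+s)/2}$ with $K(y)$ as in \eqref{estimateE_C}. The estimate \eqref{estimateE} then follows from $2E((y,f)-\lambda(Y^1,F^1)) = \|(1-\lambda)B + r_\lambda\|_2^2$, the triangle inequality $\|\cdot\|_2 \leq |1-\lambda|\sqrt{2E(y,f)} + \|r_\lambda\|_2$, and squaring. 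The delicate point is that all constants must be tracked carefully so that $K(y)$ depends only on $\|g'(y)\|_\infty$ and $[g']_s$, since these are precisely the quantities controlled by our standing hypotheses on $g$ and by the $L^\infty$ bound on $y$ coming from \eqref{estiminftyA}.
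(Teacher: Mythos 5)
Your proposal is correct and follows essentially the same route as the paper: item \ref{estimateC1C2} via the null-controllability result with $L^\infty$ potential from Appendix \ref{sec:linearizedwave} (observability/HUM constant $Ce^{C\sqrt{\Vert g'(y)\Vert_\infty}}$) combined with the identity $\Vert B\Vert_2=\sqrt{2E(y,f)}$, item \ref{differentiabiliteE} by direct expansion and substitution of \eqref{wave-Y1}, item \ref{propo1_item3} by duality and Cauchy--Schwarz, and item \ref{propo1_estimateE} by the decomposition $(1-\lambda)B+\ell(y,-\lambda Y^1)$ with the H\"older remainder bound $\vert\ell\vert\lesssim [g']_s\vert\lambda\vert^{1+s}\vert Y^1\vert^{1+s}$ and the triangle inequality. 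The only (harmless) deviations are the slightly sharper constant $\tfrac{[g']_s}{1+s}$ from the integral representation of the remainder and the explicit use of $\vert\lambda\vert^{1+s}$ for negative $\lambda$.
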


\begin{proof} 
Let us establish \ref{estimateC1C2}.
The first estimate is a consequence of Lemma \ref{controllability_result} in Appendix \ref{sec:linearizedwave}, using the equality $\Vert \partial_{tt} y-\Delta y +g(y)-f 1_{\omega}\Vert_2=\sqrt{2E(y,f)}$. The second one follows from 
\begin{equation}
\nonumber
\begin{aligned}
\Vert (Y^1,F^1)\Vert_\H & \leq  \Vert \partial_{tt}Y^1-\partial_{xx}Y^1\Vert_2 + \Vert Y^1\Vert_2+\Vert F^1\Vert_{2,q_T}+\Vert Y^1(\cdot,0),\partial_t Y^1(\cdot,0)\Vert_{\boldsymbol{V}} \\
& \leq (1+\Vert g^{\prime}(y)\Vert_\infty)\Vert Y^1\Vert_2 + 2\Vert F^1\Vert_{2,q_T}+\sqrt{E(y,f)}\\
& \leq C (1+\Vert g^{\prime}(y)\Vert_\infty)e^{C\sqrt{\Vert g^{\prime}(y)\Vert_{\infty}}}\sqrt{E(y,f)}\\
& \leq C e^{(2+C)\sqrt{\Vert g^{\prime}(y)\Vert_{\infty}}}\sqrt{E(y,f)}
\end{aligned}
\end{equation}
using that $(1+s)\leq e^{2\sqrt{s}}$ for every $s\geq 0$.

To prove \ref{differentiabiliteE}, we first check that, for every $(Y,F)\in \mathcal{A}_0$, the functional $E$ is differentiable at $(y,f)\in \mathcal{A}$ along the direction $(Y,F)\in \mathcal{A}_0$. For any $\lambda\in \R$, simple computations lead to
$$
E(y+\lambda Y,f+\lambda F) =E(y,f)+ \lambda E^{\prime}(y,f)\cdot (Y,F) + h((y,f),\lambda (Y,F))
$$
with 
\begin{equation}\label{Efirst}
E^{\prime}(y,f)\cdot (Y,F):=\big(\partial_{tt}y-\partial_{xx}y+ g(y)-f 1_\omega,  \partial_{tt}Y-\partial_{xx}Y+ g^\prime(y)Y-F 1_\omega\big)_2
\end{equation}
and
\begin{multline*}
h((y,f),\lambda (Y,F)):= \frac{\lambda^2}{2}\big(\partial_{tt}Y-\partial_{xx}Y+ g^\prime(y)Y-F 1_\omega,\partial_{tt}Y-\partial_{xx}Y+ g^\prime(y)Y-F 1_\omega\big)_2 \\
+ \lambda \big(\partial_{tt}Y-\partial_{xx}Y+ g^\prime(y)Y-F 1_\omega,\ell(y,\lambda Y)\big)_2  \\
+\big(\partial_{tt}y-\partial_{xx}y+ g(y)-f 1_\omega,\ell(y,\lambda Y)\big)_2+ \frac{1}{2}(\ell(y,\lambda Y),\ell(y,\lambda Y))
\end{multline*}
where 
\begin{equation}\label{defell}
\ell(y,\lambda Y):=g(y+\lambda Y)-g(y)-\lambda g^{\prime}(y)Y .
\end{equation}
The mapping $(Y,F)\mapsto E^{\prime}(y,f)\cdot (Y,F)$ is linear continuous from $\mathcal{A}_0$ to $\R$ since
\begin{equation}
\label{useful_estimate}
\begin{aligned}
\vert E^{\prime}(y,f)\cdot (Y,F)\vert &\leq  \Vert \partial_{tt}y-\partial_{xx}y+ g(y)-f 1_\omega\Vert_2 \Vert \partial_{tt}Y-\partial_{xx}Y+ g^\prime(y)Y-F 1_\omega\Vert_2\\
& \leq \sqrt{2E(y,f)} \, \big(\Vert (\partial_{tt}Y-\partial_{xx}Y)\Vert_2 + \Vert g^\prime(y)\Vert_{\infty}\Vert Y\Vert_2 + \Vert F\Vert_{2,q_T}\big)\\
& \leq \sqrt{2E(y,f)}\, \max\big(1,\Vert g^\prime(y)\Vert_\infty\big) \Vert (Y,F)\Vert_\H.
\end{aligned}
\end{equation}
Similarly, for every $\lambda\in \R\setminus\{0\}$, 
\begin{multline*}
\Big\vert \frac{1}{\lambda} h((y,f),\lambda (Y,F))\Big\vert  \leq \frac{\vert\lambda\vert}{2} \Vert \partial_{tt}Y-\partial_{xx}Y+ g^\prime(y)Y-F 1_\omega\Vert^2_2 \\
+\Big( \vert\lambda\vert \Vert \partial_{tt}Y-\partial_{xx}Y+ g^\prime(y)Y-F 1_\omega\Vert_2 +\sqrt{2E(y,f)}+\frac{1}{2}\Vert \ell(y,\lambda Y)\Vert_2\Big) \frac{1}{\vert\lambda\vert}\Vert \ell(y,\lambda Y)\Vert_2.
\end{multline*}
Since $g^\prime\in L_{\mathrm{loc}}^\infty(\R)$ and $y\in L^{\infty}(Q_T)$, we have
$$
\Big\vert \frac{1}{\lambda}\ell(y,\lambda Y)\Big\vert = \Big\vert \frac{g(y+\lambda Y)-g(y)}{\lambda}-g^{\prime}(y)Y\Big\vert\leq \big(\sup_{\theta\in (0,1)} \Vert g^{\prime}(y+\theta Y)\Vert_{\infty}+\Vert g^{\prime}(y)\Vert_{\infty}\big) \vert Y\vert 
$$
a.e. in $Q_T$,
and $\big\vert \frac{1}{\lambda}\ell(y,\lambda Y)\big\vert = \big\vert \frac{g(y+\lambda Y)-g(y)}{\lambda}-g^{\prime}(y)Y\big\vert\to 0$ as $\lambda\to 0$ a.e. in $Q_T$. By the Lebesgue dominated convergence theorem, it follows that $\vert \frac{1}{\lambda}\vert \Vert \ell(y,\lambda Y)\Vert_2\to 0$ as $\lambda\to 0$ and then that $\vert h((y,f),\lambda (Y,F))\vert=o(\lambda)$. 
We deduce that the functional $E$ is differentiable at the point $(y,f)\in \mathcal{A}$ along the direction $(Y,F)\in \mathcal{A}_0$.
Finally, \eqref{estimateEEprime} follows from the definition of $(Y^1,F^1)$ given in (\ref{wave-Y1}). 

Let us establish \ref{propo1_item3}.
Note that, by \eqref{Efirst}, the derivative $E^{\prime}(y,f)$ does not depend on $(Y,F)$. 
Now, \eqref{estimateEEprime} gives $E(y,f)=\frac12 E^{\prime}(y,f)\cdot (Y^1,F^1)$
where $(Y^1,F^1)\in \mathcal{A}_0$ is solution of (\ref{wave-Y1}) and, using \eqref{estimateF1Y1_A0},
$$E(y,f)\leq\frac12 \Vert E'(y,f)\Vert_{ \mathcal{A}_0'} \Vert(Y^1,F^1)\Vert_{ \mathcal{A}_0}\leq \frac12  C e^{C\sqrt{\Vert g^{\prime}(y)\Vert_\infty}} \Vert E'(y,f)\Vert_{ \mathcal{A}_0'}\sqrt{E(y,f)}.
$$
Besides, for all $(Y,F)\in\mathcal{A}_0$, the inequality $\vert E^{\prime}(y,f)\cdot (Y,F)\vert \leq  \sqrt{2E(y,f)} \max(1,\Vert  g^\prime(y)\Vert_{\infty}) \Vert (Y,F)\Vert_\H$ coming from \eqref{useful_estimate} leads to the left inequality in \eqref{ineq_E_Eprime}.

Let us finally establish \ref{propo1_estimateE}. We start by observing that, since $g(y-\lambda Y^1) = \ell(y,-\lambda Y^1)+g(y)-\lambda g^\prime(y) Y^1$, and since $(Y^1,F^1)\in \mathcal{A}_0$ is solution of \eqref{wave-Y1}, we have
\begin{equation}\label{Eell}
\begin{aligned}
& E\big((y,f)-\lambda (Y^1,F^1)\big) \\
&\quad= \frac{1}{2} \big\Vert \big(\partial_{tt}y-\partial_{xx}y+g(y)-f1_\omega\big)-\lambda \big(\partial_{tt}Y^1-\partial_{xx}Y^1+g^\prime(y)Y^1-F^1 1_\omega\big)+\ell(y,-\lambda Y^1)\big\Vert_2^2 \\
&\quad = \frac{1}{2} \big\Vert (1-\lambda)\big(\partial_{tt}y-\partial_{xx}y+g(y)-f1_\omega\big)+\ell(y,-\lambda Y^1)\big\Vert_2^2 .
\end{aligned}
\end{equation}
Now, for any $(u,v)\in \R^2$ and any $\lambda\geq 0$, writing $g(u+\lambda v)-g(u)=v\int_0^\lambda g^\prime(u+\xi v) \, d\xi$, we have 
$$
\vert g(u+\lambda v)-g(u)-\lambda g^\prime(u)v\vert
\leq \int_0^\lambda |v| |g^\prime(u+\xi v)-g^\prime(u)| \, d\xi
\leq 
[g^\prime]_s \, |v|^{1+s} \, \lambda^{1+s} . 
$$
It follows that 
\begin{equation*}
\vert \ell(y,-\lambda Y^1) \vert=|g(y-\lambda Y^1)-g(y)+\lambda g^{\prime}(y) \, Y^1| \leq  [g^\prime]_s \, \lambda^{1+s} 
|Y^1|^{1+s}
\end{equation*}
and thus, using \eqref{estimateF1Y1},
\begin{multline}\label{estim1}
\big \Vert \ell(y,-\lambda Y^1)\big\Vert_2 \leq  [g^\prime]_s \, \lambda^{1+s} 
\, \big \Vert |Y^1|^{1+s}\big\Vert_{L^2(0,T;L^{2}(\Omega))}
\leq [g^\prime]_s \, \lambda^{1+s} \sqrt{2}C \Vert Y^1\Vert_{L^{\infty}(Q_T)}^{1+s} \\
\leq [g^\prime]_s \, \lambda^{1+s} \sqrt{2}C \Big(Ce^{C\sqrt{\Vert g^{\prime}(y)\Vert_{\infty}}}\Big)^{1+s} E(y,f)^{\frac{1+s}{2}}
\end{multline}
for some positive constant $C$ only depending on $\Omega$ and $T$.
Hence, using \eqref{Eell}, we get
\begin{equation*}
\begin{split}
\sqrt{2E\big((y,f)-\lambda (Y^1,F^1)\big)} 
& \leq \big\Vert (1-\lambda)\big(\partial_{tt}y-\partial_{xx}y+g(y)-f1_\omega\big)\big\Vert_2 +\big\Vert \ell(y,-\lambda Y^1)\big\Vert_2 \\
& \leq  \vert 1-\lambda\vert \sqrt{2E(y,f)}+ [g^\prime]_s \, \lambda^{1+s}
\, \big \Vert |Y^1|^{1+s}\big\Vert_{L^2(0,T;L^{2}(\Omega))} 
\end{split}
\end{equation*}
and, using \eqref{estim1}, the estimate \eqref{estimateE} follows.
\end{proof}

\paragraph{Consequence.}
An important consequence of Proposition \ref{propo1} and in particular of \eqref{ineq_E_Eprime} is that any \textit{critical} point $(y,f)\in \mathcal{A}$ of $E$ (i.e., $E^\prime(y,f)=0$) is a zero of $E$, and thus is a pair solution of the controllability problem. 
Moreover:
\begin{quote}
\emph{given any sequence $(y_k,f_k)_{k\in\N}$ in $\mathcal{A}$ such that $\Vert E^\prime(y_k,f_k)\Vert_{\mathcal{A}_0^\prime}\underset{k\to+\infty}{\longrightarrow} 0$ and such that $\Vert g^{\prime}(y_k)\Vert_{\infty}$ is uniformly bounded, we have $E(y_k,f_k)\underset{k\to+\infty}{\longrightarrow} 0$.} 
\end{quote}
This is thanks to this instrumental property that a minimizing sequence for $E$ cannot be stuck in a local minimum, and this, even though $E$ fails to be convex (it has multiple zeros). Our least-squares algorithm, designed in the next section, and our main result, Theorem \ref{mainthm}, are based on that property.

Note that the left inequality in \eqref{ineq_E_Eprime} indicates the functional $E$ is flat around its zero set. As a consequence, gradient-based minimizing sequences may have a low speed of convergence (see \cite{lemoine-Munch-Pedregal-AMO-20,AM-PP-2014} for such issues for the Navier-Stokes equation).

\subsection{Least-squares algorithm}\label{sec_algo}
Assume that $T>2\max(\ell_1,1-\ell_2)$.
By \eqref{estimateEEprime} in Proposition \ref{propo1}, the vector $-(Y^1,F^1)$, solution of minimal control norm of \eqref{wave-Y1}, is a descent direction for $E$. This leads us to define, for any fixed $m\geq 1$, the sequence $(y_k,f_k)_{k\in\N}$ in $\mathcal{A}$ defined by
\begin{equation}\label{lsq1}
\boxed{
\left\{
\begin{aligned}
&(y_0,f_0) \in \mathcal{A} \\
&(y_{k+1},f_{k+1})=(y_k,f_k)-\lambda_k (Y^1_k,F_k^1) \qquad \forall k\in\N \\
& \lambda_k= \underset{\lambda\in [0,m]}{\mathrm{argmin}} \ E\big((y_k,f_k)-\lambda (Y^1_k,F_k^1)\big)
\end{aligned}
\right.
}
\end{equation}
where $(Y^1_k,F_k^1)\in \mathcal{A}_0$ is the solution of minimal control norm of
\begin{equation}\label{lsq2}
\boxed{
\left\{
\begin{aligned}
& \partial_{tt}Y_k^1 -\partial_{xx} Y^1_k +  g^{\prime}(y_k)\cdot Y^1_k = F^1_k 1_{\omega}+ (\partial_{tt}y_k-\partial_{xx}y_k+g(y_k)-f_k 1_\omega) & \textrm{in}\  Q_T,\\
& Y_k^1=0  & \textrm{on}\  \Sigma_T, \\
& (Y_k^1(\cdot,0),\partial_t Y_k^1(\cdot,0))=(0,0) & \textrm{in}\  \Omega.
\end{aligned}
\right.
}
\end{equation}
The real number $m\geq 1$ is arbitrarily fixed. It is used in the proof of convergence to bound the sequence of optimal descent steps $\lambda_k$. 

\subsection{Main result}
Given any $s\in[0,1]$, we set
\begin{equation}\label{def_betahat}
\beta^0(s) := \frac{s^2}{C^2(2s+1)^2} 
\end{equation}
where $C>0$, only depending on $\Omega$ and $T$, is given by Proposition \ref{propo1}. Note that $(2+\frac{1}{s})C\sqrt{\beta^0(s)}=1$.

\begin{theorem}\label{mainthm}
We assume that $T>2\max(\ell_1,1-\ell_2)$, that $g\in \mathcal{C}^{1,s}(\R)$ for some $s\in[0,1]$, and that there exist $\alpha\geq 0$ and $\beta\in[0,\beta^0(s))$ (with the agreement that $\beta=0$ if $s=0$), such that 
\begin{equation}\label{growth_condition}
\vert g^{\prime}(x)\vert \leq \alpha + \beta \ln^2(1+\vert x\vert)\qquad\forall x\in\R .
\end{equation}
In the case where $s=0$ (i.e., $g^\prime\in L^\infty(\R)$) but $g^\prime\notin \mathcal{C}^{1,s}(\R)$ for any $s\in(0,1]$, we assume moreover that $2 \Vert g^\prime\Vert_\infty C^2 e^{C\sqrt{\Vert g^{\prime}\Vert_\infty}} <1$.
Then:
\begin{itemize}
\item The sequence $(y_k,f_k)_{k\in\N}$ in $\mathcal{A}$ defined by \eqref{lsq1}, initialized at any $(y_0,f_0)\in\A$, converges to $(\overline{y},\overline{f})\in\mathcal{A}$, where $(\overline{y},\overline{f})$ is a solution of \eqref{eq:wave-NL} such that $(y(\cdot,T),\partial_t y(\cdot,T))=(z_0,z_1)$. 
\item The sequence $(\lambda_k)_{k\in\N}$ consists of positive real numbers and converges to $1$.
\item The decreasing sequence $(E(y_k,f_k))_{k\in\N}$ converges to $0$. 
\end{itemize}
Moreover, the convergence of all these sequences is at least linear, and is at least of order $1+s$ after a finite number of iterations.\footnote{We recall that a sequence $(u_k)_{k\in\N}$ of real numbers converges to $0$ with order $\alpha\geq 1$ if there exists $M>0$ such that $\vert u_{k+1}\vert \leq M \vert u_k\vert^\alpha$ for every $k\in\N$. A sequence $(v_k)_{k\in\N}$ of real numbers converges to $0$ at least with order $\alpha\geq 1$ if there exists a sequence $(u_k)_{k\in\N}$ of nonnegative real numbers converging to $0$ with order $\alpha\geq 1$ such that $\vert v_k\vert\leq u_k$ for every $k\in\N$.}
\end{theorem}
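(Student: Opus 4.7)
The plan is to combine the descent inequality \eqref{estimateE} of Proposition \ref{propo1} with the growth condition \eqref{growth_condition} on $g'$ via a bootstrap argument. The four interlocking ingredients are: (i) the one-step decay of $E_k:=E(y_k,f_k)$ provided by the line search; (ii) the estimate \eqref{estimateF1Y1_A0}, which converts control of $E_k$ into control of the increments $y_{k+1}-y_k=-\lambda_k Y^1_k$; (iii) the logarithmic growth \eqref{growth_condition}, which forces $\|g'(y_k)\|_\infty$ and hence $K(y_k)$ to grow at most polynomially in $\|y_k\|_\infty$; (iv) the smallness of $\beta$, which makes (ii) dominate this polynomial growth. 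Closing these four steps into a self-consistent induction will produce uniform bounds on $\|y_k\|_\infty$ and $K(y_k)$ together with a uniform contraction rate for $E_k$.

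For ingredient (i), I would analyze, for fixed $t\geq 0$, the scalar map $\varphi_s(\lambda):=|1-\lambda|+\lambda^{1+s}t$ appearing on the right of \eqref{estimateE} through $t_k:=K(y_k)E_k^{s/2}$. For $s\in(0,1]$, elementary calculus shows that the minimum of $\varphi_s$ on $[0,m]$ is strictly less than $1$ for every $t\geq 0$, is attained at $\lambda^\star=\min(1,((1+s)t)^{-1/s})$, and tends to $0$ as $t\to 0^+$. For $s=0$, the extra hypothesis $2\|g'\|_\infty C^2 e^{C\sqrt{\|g'\|_\infty}}<1$ precisely says that $K(y)<1$ uniformly in $y$, so $\lambda=1\in[0,m]$ already yields a strict contraction. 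In either case $\sqrt{E_{k+1}}\leq\varphi_s(\lambda_k)\sqrt{E_k}$ and $(E_k)$ is strictly decreasing, and $\lambda_k\to 1$ as soon as $t_k\to 0$.

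The main obstacle is the uniform $L^\infty$-bound on $y_k$. Combining the $L^\infty$ part of \eqref{estimateF1Y1} with \eqref{growth_condition} gives $\sqrt{\|g'(y_k)\|_\infty}\leq\sqrt\alpha+\sqrt\beta\ln(1+\|y_k\|_\infty)$, hence
\[
\|Y^1_k\|_\infty\leq C e^{C\sqrt\alpha}(1+\|y_k\|_\infty)^{C\sqrt\beta}\sqrt{E_k},
\qquad
t_k\leq C[g']_s(Ce^{C\sqrt\alpha})^{1+s}(1+\|y_k\|_\infty)^{(1+s)C\sqrt\beta}E_k^{s/2}.
\]
I would then set up a simultaneous induction of the form $\|y_k\|_\infty\leq M^\star$ and $E_k\leq\rho^k E_0$, with $M^\star$ and $\rho<1$ tuned together. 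From Step~2 the contraction factor behaves like $1-c\,t_k^{-1/s}$ when $t_k\gg 1$, while telescoping $\|y_{k+1}\|_\infty\leq\|y_k\|_\infty+m C e^{C\sqrt\alpha}(1+\|y_k\|_\infty)^{C\sqrt\beta}\sqrt{E_k}$ controls $\|y_k\|_\infty$ through $\sum_k\sqrt{E_k}$. Balancing the two exponents yields the sharp threshold $(2+\tfrac{1}{s})C\sqrt\beta<1$, that is $\beta<\beta^0(s)$, which is precisely what closes the induction and produces the desired uniform bounds $\|y_k\|_\infty\leq M^\star$ and $K(y_k)\leq K^\star$.

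Once the uniform bounds are in hand, the remaining conclusions follow quickly. Since $t_k\to 0$, the optimal step $\lambda_k$ tends to $1$; as soon as $((1+s)t_k)^{-1/s}\geq 1$, the choice $\lambda=1$ is admissible in $[0,m]$ and \eqref{estimateE} with $\lambda=1$ gives $E_{k+1}\leq K(y_k)^2 E_k^{1+s}\leq(K^\star)^2 E_k^{1+s}$, i.e.\ convergence of order at least $1+s$ after finitely many iterations, with at least linear decay before. From \eqref{estimateF1Y1_A0} one has $\|(y_{k+1},f_{k+1})-(y_k,f_k)\|_{\H}\leq m C e^{C\sqrt{\|g'(y_k)\|_\infty}}\sqrt{E_k}\leq C^\star\sqrt{E_k}$, and the geometric then super-linear decay of $E_k$ makes $\sum_k\sqrt{E_k}$ finite, so $(y_k,f_k)$ is Cauchy in $\H$ and converges to some $(\bar y,\bar f)\in\A$. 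Continuity of $E$ on $\A$, via $g\in\mathcal{C}^1$ and the embedding $\H\hookrightarrow L^\infty(Q_T)$ from \eqref{estiminftyA}, permits passing to the limit in $E(y_k,f_k)\to 0$ to conclude $E(\bar y,\bar f)=0$, i.e.\ $(\bar y,\bar f)$ is a controlled solution of \eqref{eq:wave-NL}.
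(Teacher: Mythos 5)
Your proposal is correct and follows essentially the same strategy as the paper's proof: the line-search decay estimate from Proposition \ref{propo1}\ref{propo1_estimateE}, the explicit minimization of $\lambda\mapsto\vert 1-\lambda\vert+\lambda^{1+s}t$, a bootstrap (the paper phrases it as an \emph{a priori} bound $\Vert y_k\Vert_\infty\leq M$ verified \emph{a posteriori}) closed by the exponent balance $(2+\tfrac{1}{s})C\sqrt{\beta}<1$ defining $\beta^0(s)$, and the summability of $\sqrt{E_k}$ to get a Cauchy sequence in $\H$. The only cosmetic differences are that you telescope $\Vert y_{k+1}\Vert_\infty\leq\Vert y_k\Vert_\infty+\lambda_k\Vert Y^1_k\Vert_\infty$ directly and conclude $E(\overline{y},\overline{f})=0$ by continuity of $E$, whereas the paper bounds $\Vert y_k\Vert_\infty$ through $\Vert(y_k,f_k)\Vert_\H$ and passes to the limit in \eqref{lsq2}; both routes are valid.
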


\begin{remark}
The limit $(\overline{y},\overline{f})\in\A$ of the sequence $(y_k,f_k)_{k\in\N}$, given by 
$$
(\overline{y},\overline{f})=(y_0,f_0)-\sum_{k=0}^{+\infty}\lambda_k (Y^1_k,F_k^1)  ,
$$
depends on the choice of the initialization $(y_0,f_0)\in\A$ (see also Remark \ref{rem_limit} further). It also depends on the selection criterion that we have chosen: in \eqref{lsq2}, $F^1_k$ is the control of minimal norm.
\end{remark}

\begin{remark}\label{rem2}
In this remark, we assume that $g^\prime\in L^\infty(\R)$.
When $g^\prime$ is not uniformly H\"older continuous, a smallness condition on $\Vert g^\prime\Vert_\infty$ is required in order to obtain the convergence. This condition is not required anymore as soon as $g^\prime\in\mathcal{C}^{1,s}(\R)$ for some $s\in(0,1]$: indeed, then, $g^\prime$ satisfies the growth condition \eqref{growth_condition} with $\alpha=\Vert g^\prime\Vert_\infty$ and $\beta=0$, and Theorem \ref{mainthm} can be applied. 
\end{remark}

\begin{remark}\label{rem_betastar}
In Theorem \ref{mainthm}, we have assumed that the nonnegative coefficient $\beta$ appearing in the growth condition \eqref{growth_condition} is lower than $\beta^0(s)$, i.e., 
$$
\limsup_{\vert x\vert\to+\infty} \frac{\vert g^\prime(x)\vert}{\ln^2\vert x\vert} < \frac{s^2}{(2s+1)^2C^2} 
$$
(with the agreement that $\limsup_{\vert x\vert\to+\infty} \frac{\vert g^\prime(x)\vert}{\ln^2\vert x\vert}=0$ if $s=0$), 
which, of course, implies that
$$
\limsup_{\vert x\vert \to +\infty} \frac{\vert g(x)\vert }{\vert x\vert \ln^2\vert x\vert} < \frac{s^2}{(2s+1)^2C^2} .
$$
The threshold $\beta^0(s)$ is maximal when $s=1$, i.e., when $g^\prime$ is Lipschitz continuous, and we have $\beta^0(1)=\frac{1}{9C^2}$.

In comparison, the threshold $\bar\beta$ in Theorem \ref{thm_Zuazua} satisfies $\overline{\beta}< 1/(1+C)^{2}$ where $C$ is another constant (only depending on $\Omega$ and $T$), appearing in the a priori estimate \eqref{estimate_u_z} of Lemma \ref{controllability_result} in Appendix \ref{sec:linearizedwave}.

There exist cases covered by Theorem \ref{thm_Zuazua} (or, by the extension established in \cite{Cannarsa_Loreti_Komornik2002}), in which exact controllability of \eqref{eq:wave-NL} is true, but that are not covered by Theorem \ref{mainthm}.
Note however that the example 
$g(x)=a+bx+\frac{1}{9C^2+\varepsilon} x \ln^2(1+\vert x\vert)$ for any $\varepsilon > 0$ and any $a,b\in\R$
(which is somehow the limit case in Theorem \ref{thm_Zuazua}) satisfies $g\in \mathcal{C}^{1,1}(\R)$ as well as \eqref{growth_condition}.

While Theorem \ref{thm_Zuazua} was established in \cite{zuazua93} by a nonconstructive Leray-Schauder fixed point argument, we obtain here, in turn, a new proof of the exact controllability of semilinear 1D wave equations, which is moreover constructive, with an algorithm that converges unconditionally, at least with order $1+s$.
\end{remark}

\begin{remark}
The convergence in Theorem \ref{mainthm} is unconditional. Anyway, a natural example of an initialization $(y_0,f_0)\in\A$ is to take $(y_0,f_0)=(y^\star,f^\star)$, the unique solution of minimal control norm of \eqref{eq:wave-NL} with $g=0$ (i.e., in the linear case).
\end{remark}

\begin{remark}\label{rem_number_iter}
As stated in Theorem \ref{mainthm}, the convergence is at least of order $1+s$ after a number $k_0$ of iterations. 
In this remark, we give the precise expression for $k_0$ in function of the various parameters.
Given any $s\in[0,1]$, any $\alpha\geq 0$, any $\beta\in[0,\beta^0(s))$ and any $M\geq 0$, we set
\begin{equation}\label{defc}
c 
:= [g^\prime]_s \, C^{2+s} e^{(1+s)C\sqrt{\alpha}} (1+M)^{(1+s)C\sqrt{\beta}} 
\end{equation}
with the agreement that, when $s=0$, we take $\alpha=\Vert g^{\prime}\Vert_\infty$ and $\beta=0$, so that $c=2 \Vert g^\prime\Vert_\infty C^2  e^{C\sqrt{\Vert g^{\prime}\Vert_\infty}}$ (because, by convention, $[g']_0 :=2\Vert g^\prime\Vert_\infty$), which is the quantity required to be less than $1$ in Theorem \ref{mainthm} when $s=0$. With this convention, $c$ is a continuous function of $s$ on $[0,1]$.

If $(1+s)\, c\, E(y_0,f_0)^{\frac{s}{2}} < 1$ (this includes the case $s=0$) then $k_0=0$, and otherwise,
\begin{equation}\label{defk0}
k_0 = \left\lfloor  \frac{(1+s)^{1+\frac{1}{s}}}{s} \left( c^{\frac{1}{s}} \sqrt{E_0} - 1 \right) \right\rfloor + 1 ,
\end{equation}
where $\lfloor \cdot\rfloor$ is the integer part, and where $M>0$ is the minimal possible real positive number such that
\begin{equation*}
\begin{split}
& 1 \leq (1+s) [g^\prime]_s \, C^{2+s} e^{(1+s)C\sqrt{\alpha}} (1+M)^{(1+s)C\sqrt{\beta}} E_0^{\frac{s}{2}}  , \\
& C \Vert (y_0,f_0)\Vert_\H + \frac{m}{s}(1+s)^{1+\frac{1}{s}} [g^\prime]_s^{\frac{1}{s}} C^{3+\frac{2}{s}} \, e^{(2+\frac{1}{s})C\sqrt{\alpha}} (1+M)^{(2+\frac{1}{s})C\sqrt{\beta}} E_0 \leq M .
\end{split}
\end{equation*}
The real number $M$ is defined in an implicit way. More details are given in Section \ref{sec_proof_mainthm}, at Step \ref{step7} of the proof, where we give in particular an explicit expression for $M$ when $s\simeq 0$.
\end{remark}

\begin{remark}
As a continuation to Remark \ref{rem2}, it is interesting to note that, assuming that $g^\prime\in L^\infty(\R)$:
\begin{itemize}
\item As stated in Theorem \ref{mainthm}, if $g^\prime\notin \mathcal{C}^{1,s}(\R)$ for any $s\in(0,1]$, to obtain convergence it is required to assume that $2 \Vert g^\prime\Vert_\infty C^2 e^{C\sqrt{\Vert g^{\prime}\Vert_\infty}} <1$, i.e., that $c_{\vert s=0} < 1$ with the notations of Remark \ref{rem_number_iter}, and we have $k_0=0$.
\item If $2 \Vert g^\prime\Vert_\infty C^2 e^{C\sqrt{\Vert g^{\prime}\Vert_\infty}} \geq 1$ and if $g^\prime\in \mathcal{C}^{1,s}(\R)$ for some $s\in(0,1]$, then Theorem \ref{mainthm} applies and $k_0$ is given by \eqref{defk0}. Moreover, $k_0$ is larger as $s>0$ is smaller: more precisely, we have $k_0\sim \frac{e}{s} \left( 2 \Vert g^\prime\Vert_\infty C^2 e^{C\sqrt{\Vert g^{\prime}\Vert_\infty}} \right) ^{\frac{1}{s}}$ as $s\rightarrow 0$.
\end{itemize}
\end{remark}

\begin{remark}\label{rem_estimA}
Using \eqref{growth_condition}, we have, for every $(y_0,f_0)\in\A$,
\begin{equation}\label{estimA}
\begin{split}
\sqrt{E(y_0,f_0)}&\leq \Vert \partial_{tt}y_0-\partial_{xx}y_0\Vert_2 +\Vert f_0 1_{\omega}\Vert_2+\Vert g(y_0)\Vert_2\\
&\leq \Vert (y_0,f_0)\Vert_\H+ T \vert g(0)\vert + T (\alpha +\beta \ln^2(1+\Vert y_0\Vert_\infty) )\Vert y_0\Vert_\infty\\
&\leq \Vert (y_0,f_0)\Vert_\H+ T \vert g(0)\vert + T (\alpha +\beta \ln^2(1+\Vert (y_0,f_0)\Vert_\H) )\Vert (y_0,f_0)\Vert_\H .
\end{split}
\end{equation}
\end{remark}

\begin{remark}\label{remproof}
If $s=0$ or if $\beta=0$ in \eqref{growth_condition} then $g^\prime\in L^\infty(\R)$. In this case the proof of Theorem \ref{mainthm} is simpler. When $s>0$ and $\beta>0$, as alluded at the end of Section \ref{sec_lsq_method}, in the proof of Theorem \ref{mainthm}, the main difficulty is to prove that the sequence $(\Vert y_k\Vert_{\infty})_{k\in\N}$ (defined in \eqref{lsq1}) remains uniformly bounded, in particular in order to keep a uniform bound on the sequence of observability constants $Ce^{C\sqrt{\Vert g^{\prime}(y_k)\Vert_{\infty}}}$ appearing in the estimates of Proposition \ref{propo1}. 
In the proof, done in Section \ref{sec_proof_mainthm}, this difficulty is handled by an \emph{a priori} assumption, which we prove to be satisfied \emph{a posteriori} thanks to fine estimates.
\end{remark}

\section{Proof of Theorem \ref{mainthm}}\label{sec_proof_mainthm}
This section is devoted to proving Theorem \ref{mainthm}. We assume that $g\in\mathcal{C}^{1,s}(\R)$ for some $s\in[0,1]$.

\paragraph{Preliminary remark.}
Let $(y_0,f_0)\in \A$ be arbitrarily fixed. In the sequel, we denote by
$$
E_k := E(y_k,f_k) 
\qquad \forall k\in\N.
$$
By the minimization property in the definition \eqref{lsq1} of the algorithm, we have $E_{k+1} = E\big((y_k,f_k)-\lambda_k (Y^1_k,F_k^1)\big) \leq E\big((y_k,f_k)-\lambda(Y^1_k,F_k^1)\big)$ for every $\lambda\in[0,m]$. 
Applying the estimate \eqref{estimateE} of Proposition \ref{propo1}, Item \ref{propo1_estimateE}, to $(y_k,f_k)$, we infer that
\begin{equation}\label{estimprelim}
E_{k+1} \leq \min_{\lambda\in[0,m]} \Big( \vert 1-\lambda\vert + \lambda^{1+s} K(y_k) E_k^\frac{s}{2} \Big)^2 E_k \qquad \forall k\in\N
\end{equation}
where we recall that
$$
K(y_k) 
= [g^\prime]_s \, C^{2+s} \, e^{(1+s)C\sqrt{\Vert g^{\prime}(y_k)\Vert_\infty}} .
$$
The estimate \eqref{estimprelim} is instrumental in the proof of Theorem \ref{mainthm}.

\medskip

Having in mind Remark \ref{remproof}, we fix a constant $M>0$, large enough, to be chosen later. In what follows, we make the \emph{a priori} assumption 
\begin{equation}\label{apriori}
\Vert y_k\Vert_\infty\leq M\qquad\forall k\in\N.
\end{equation}
We are going to see \emph{a posteriori} that, if $M$ is adequately chosen large enough, then \eqref{apriori} is indeed satisfied.
The proof goes in several steps.

\begin{step}\label{step1}
There exists $k_0\in\N$ (given by \eqref{defk0}) such that the sequence $(E_k)_{k\geq k_0}$ decays to $0$ with order greater than or equal to $1+s$. 
\end{step}

Using the growth condition \eqref{growth_condition} and using the \emph{a priori} assumption \eqref{apriori}, we have
$$
\Vert g^\prime(y_k)\Vert_\infty \leq \alpha+\beta\ln^2(1+M) \qquad\forall k\in\N.
$$
Here and in the sequel, we adopt the convention that, when $s=0$, we take $\alpha=\Vert g^\prime\Vert_\infty$ and $\beta=0$.
Using the inequality $\sqrt{a+b}\leq\sqrt{a}+\sqrt{b}$ for all $a,b\geq 0$, we get
\begin{equation}\label{majC}
C e^{C \sqrt{\Vert g^\prime(y_k)\Vert_\infty}} \leq C e^{C\sqrt{\alpha+\beta\ln^2(1+M)}} \leq C e^{C\sqrt{\alpha}} (1+M)^{C\sqrt{\beta}}
\end{equation}
and thus $K(y_k) \leq c$ where $K(y_k)$ is defined by \eqref{estimateE_C} and $c$ is defined by \eqref{defc} (including the case $s=0$).
By \eqref{estimprelim}, we have 
\begin{equation}\label{estek}
\sqrt{E_{k+1}} \leq \min_{\lambda\in [0,m]} e_k(\lambda) \sqrt{E_k} 
\qquad\textrm{with}\qquad
e_k(\lambda):=\vert 1-\lambda\vert +\lambda^{1+s} c\, E_k^{\frac{s}{2}} .
\end{equation}
Let $\widetilde\lambda_k\in[0,m]$ be the minimizer of $e_k(\lambda)$ over $[0,m]$ (not to be confused with $\lambda_k$ defined in \eqref{lsq1}). 

\medskip

Let us first treat the case where $s\in(0,1]$.
Assuming that $E_k>0$ (otherwise there is nothing to do), we have
\begin{equation} \label{ptildek}
\begin{array}{lclcl}
\displaystyle\widetilde\lambda_k = 1 & \textrm{and} & \displaystyle e_k(\widetilde\lambda_k) = c\, E_k^{\frac{s}{2}} & \textrm{if} &  (1+s)^{\frac{1}{s}} c^{\frac{1}{s}}\sqrt{E_k}<1 , \\
\displaystyle\widetilde\lambda_k = \frac{1}{(1+s)^{\frac{1}{s}} c^{\frac{1}{s}}\sqrt{E_k}} & \textrm{and} & \displaystyle e_k(\widetilde\lambda_k) = 1- \frac{s}{(1+s)^{1+\frac{1}{s}} c^{\frac{1}{s}}\sqrt{E_k}} & \textrm{if} &  (1+s)^{\frac{1}{s}} c^{\frac{1}{s}}\sqrt{E_k}\geq 1 , 
\end{array}
\end{equation}
and therefore, by \eqref{estek}, 
\begin{equation}\label{decayEunquarts}
c^{\frac{1}{s}}\sqrt{E_{k+1}}\leq \left\{ \begin{array}{lcl}
\big(c^{\frac{1}{s}}\sqrt{E_k}\big)^{1+s} & \textrm{if} & (1+s)^{\frac{1}{s}} c^{\frac{1}{s}}\sqrt{E_k}<1, \\
c^{\frac{1}{s}}\sqrt{E_k }-\frac{s}{(1+s)^{1+\frac{1}{s}}} & \textrm{if} & (1+s)^{\frac{1}{s}} c^{\frac{1}{s}}\sqrt{E_k} \geq 1.
\end{array}\right.
\end{equation}
\begin{itemize}
\item 
As a first case, let us assume that $0<(1+s)^{\frac{1}{s}} c^{\frac{1}{s}}\sqrt{E_0}< 1$. Then $c^{\frac{1}{s}}\sqrt{E_0}< 1$ and, using \eqref{decayEunquarts}, by iteration, $c^{\frac{1}{s}}\sqrt{E_k}<1$ for every $k\in\N$ and the sequence $(c^{\frac{1}{s}}\sqrt{E_k })_{k\in\N}$ is decreasing. Hence, for every $k\in\N$, we have $(1+s)^{\frac{1}{s}} c^{\frac{1}{s}}\sqrt{E_k}<1$, i.e., we remain in this first case, and since 
$E_{k+1}\leq c^2 E_k^{1+s}$, the sequence $(E_k )_{k\in\N}$ is decreasing and converges to $0$ with order greater than or equal to $1+s$.
\item
As a second case, let us assume that $(1+s)^{\frac{1}{s}} c^{\frac{1}{s}}\sqrt{E_0}\geq 1$. It follows from \eqref{estek} that, as long as $(1+s)^{\frac{1}{s}} c^{\frac{1}{s}}\sqrt{E_k}\geq 1$, we have $c^{\frac{1}{s}}\sqrt{E_k} \leq c^{\frac{1}{s}}\sqrt{E_0} - k \frac{s}{(1+s)^{1+\frac{1}{s}}}$. Hence there exists $k_0\in\N$ 
such that $(1+s)^{\frac{1}{s}} c^{\frac{1}{s}}\sqrt{E_k}< 1$ for every $k\geq k_0$. This means that, after a finite number of iterations, we turn back to the first case. 
The minimal number of iterations is given by the formula \eqref{defk0}.
\end{itemize}


Finally, let us treat the case where $s=0$. The function $e_k(\lambda)$ is piecewise linear, and is increasing whenever $c>1$: this is why we need the smallness condition $c= 2 \Vert g^\prime\Vert_\infty C^2  e^{C\sqrt{\Vert g^{\prime}\Vert_\infty}}c<1$. Thanks to this assumption, the minimizer of $e_k(\lambda)$ is $\widetilde\lambda_k=1$ and thus $e_k(\widetilde\lambda_k)=c$. Hence $E_{k+1} \leq c E_k$ (which is also what we obtain by taking the limit $s\rightarrow 0^+$ in the first case of \eqref{decayEunquarts}) and thus $(E_k)_{k\in\N}$ is decreasing and converges to $0$ at least linearly.
In this case we have $k_0=0$.


\begin{remark}\label{remquotient}
For every $k\geq k_0$, we have (assuming that $E_k>0$)
$$
\frac{E_{k+1}}{E_k} \leq c^2 E_k^{s} .
$$
Since $E_k\to 0$ at least with order $1+s$, it follows that $\frac{E_{k+1}}{E_k}\to 0$ as $k\to+\infty$, at least with order $1+s$. 
Note also that $\lambda_k>0$ for every $k\in\N$ because the sequence $(E_k)_{k\in\N}$ is decreasing.
\end{remark}

\begin{step}\label{step2}
The sequence $(\lambda_k)_{k\geq k_0}$ defined in \eqref{lsq1} converges to $1$ as $k\to+\infty$ at least with order $1+s$.
\end{step}

Applying \eqref{Eell} to $(y,f)=(y_k,f_k)$, $(Y^1,F^1)=(Y^1_k,F^1_k)$ and $\lambda=\lambda_k$, we have, since $\lambda_k\leq m$ (and assuming that $E_k>0$),
\begin{equation*}
\begin{split}
(1-\lambda_k)^2
&=\frac{E_{k+1}}{E_k}-(1-\lambda_k)\frac{\big(\partial_{tt}y_k-\partial_{xx}y_k+g(y_k)-f_k 1_\omega,\ell(y_k,-\lambda_k Y_k^1) \big)_2}{E_k} - \frac{\Vert \ell(y_k,-\lambda_k Y_k^1)\Vert^2_{2}}{2E_k}\\
&\leq \frac{E_{k+1}}{E_k}-(1-\lambda_k)\frac{\big(\partial_{tt}y_k-\partial_{xx}y_k+g(y_k)-f_k 1_\omega,\ell(y_k,-\lambda_k Y_k^1) \big)_2}{E_k}\\
&\leq \frac{E_{k+1}}{E_k}+m\sqrt{2}\,\frac{\Vert \ell(y_k,\lambda_k Y_k^1) \Vert_2}{\sqrt{E_k}}.
\end{split}
\end{equation*}
By \eqref{estim1}, we have $\Vert \ell(y_k,\lambda_k Y_k^1) \Vert_2 \leq \lambda_k^{1+s}\sqrt{2}K(y_k) E_k^{\frac{1+s}{2}} \leq m^{1+s} \sqrt{2}\, c\, E_k^{\frac{1+s}{2}}$, and thus 
$$
(1-\lambda_k)^2\leq \frac{E_{k+1}}{E_k}+2 m^{2+s}\, c\, E_k^{\frac{s}{2}} \qquad\forall k\in\N.
$$
Since $E_k\to 0$ at least with order $1+s$ by Step \ref{step1} and $\frac{E_{k+1}}{E_k}\to 0$ at least with order $1+s$ by Remark \ref{remquotient}, it follows that $\lambda_k\to 1$ at least with order $1+s$.

\begin{step}\label{step3}
We have $e_0(\widetilde\lambda_0)<1$, and the sequence $(e_k(\widetilde\lambda_k))_{k\in\N}$ decays to $0$.
\end{step}

Indeed, since $e_0(0)=1$ and $e_0^\prime(0)<0$, we have $e_0(\widetilde\lambda_0)=\min_{\lambda\in [0,m]}e_0(\lambda)<1$ (also in the case where $s=0$ thanks to the smallness condition). The rest of the statement follows from \eqref{ptildek}.

\begin{step}\label{step4}
The series $\sum_{k\geq 0} \sqrt{E_k}$ converges, and $\displaystyle\sum_{k=p}^{+\infty} \sqrt{E_k} \leq \frac{1}{1-e_0(\widetilde\lambda_0)} \sqrt{E_p}$ for every $p\in\N$.
\end{step}

The fact that the series $\sum_{k\geq 0} \sqrt{E_k}$ converges already follows from Remark \ref{remquotient} since $\frac{E_{k+1}}{E_k}\to 0$. We will however use the rough estimate stated here. To prove it, we observe that, since the sequence $(e_k(\widetilde\lambda_k))_{k\in\N}$ is decreasing, we have $e_k(\widetilde\lambda_k)\leq e_p(\widetilde\lambda_p)\leq e_0(\widetilde\lambda_0)<1$ for all $k,p\in\N$ such that $k\geq p$, and we infer from \eqref{estek} that
$$
\sqrt{E_k}\leq (e_0(\widetilde\lambda_0))^{k-p}\sqrt{E_p} \qquad \forall k,p\in\N,\ k\geq p
$$
and the result follows.

\begin{step}\label{step5}
The series $\sum_{k\geq 0}\lambda_k (Y^1_k,F_k^1)$ converges in $\A_0$, and
$$
\bigg\Vert \sum_{k=p}^{q}  \lambda_k (Y^1_k,F^1_k) \bigg\Vert_\H \leq m C  e^{C\sqrt{\alpha}} (1+M)^{C\sqrt{\beta}}  \frac{1}{1-e_0(\widetilde\lambda_0)} \sqrt{E_p}  \qquad \forall p,q\in\N,\ q\geq p .
$$
\end{step}

Since $\lambda_k\leq m$, it follows from \eqref{estimateF1Y1_A0} in Proposition \ref{propo1} and from \eqref{majC} that
$$
\lambda_k \Vert (Y^1_k,F^1_k)\Vert_\H \leq m C  e^{C\sqrt{\alpha}} (1+M)^{C\sqrt{\beta}}  \sqrt{E_k}\qquad \forall k\in\N
$$
and the result follows, using Step \ref{step4}.

\begin{step}\label{step6}
The sequence $(y_k,f_k)_{k\in\N}$ defined by \eqref{lsq1} converges to the element $(\overline{y},\overline{f})\in\A$ given by
$$
(\overline{y},\overline{f})=(y_0,f_0)-\sum_{k=0}^{+\infty}\lambda_k (Y^1_k,F_k^1) 
$$
and the convergence is at least of order $1+s$ after $k_0$ iterations (where $k_0$ is given by Step \ref{step1}).
Moreover, $(\overline{y},\overline{f})$ is a solution of \eqref{eq:wave-NL} such that $(y(\cdot,T),\partial_t y(\cdot,T))=(z_0,z_1)$. 
\end{step}

Indeed, by \eqref{lsq1}, we have $(y_n,f_n)=(y_0,f_0)-\sum_{k=0}^{n-1} \lambda_k (Y^1_k,F_k^1)$, hence $(y_k,f_k)$ converges to $(\overline{y},\overline{f})$ defined above. 
Let us prove that $\bar f$ is a null control for $\bar y$ solution of \eqref{eq:wave-NL}.
Using that $(Y^1_k, F^1_k)\in\mathcal{A}_0$ converges to zero as $k\to+\infty$, passing to the limit in \eqref{lsq2}, we infer that $(\overline{y},\overline{f})\in \mathcal{A}$ solves 
\begin{equation}
\label{wave-limit}
\left\{
\begin{aligned}
& \partial_{tt}\overline{y} - \partial_{xx}\overline{y} +  g(\overline{y}) = \overline{f} 1_{\omega} &\textrm{in}\quad Q_T,\\
& \overline{y}=0 & \textrm{on}\  \Sigma_T, \\
& (\overline{y}(\cdot,0),\partial_t\overline{y}(\cdot,0))=(y_0,y_1) & \textrm{in}\  \Omega.
\end{aligned}
\right.
\end{equation}
Since $(\overline{y},\overline{f})\in\mathcal{A}$, we have $(\overline{y}(\cdot,T),\partial_t\overline{y}(\cdot,T))=(z_0,z_1)$ in $\Omega$, i.e., $\bar f$ is a control for $\bar y$ solution of \eqref{eq:wave-NL}.
Now, for every $k\in\N$, we have
\begin{equation}\label{estim_coercivity}
\Vert (\overline{y},\overline{f})-(y_n,f_n)\Vert_{A_0} 
= \Big\Vert \sum_{k=n}^{+\infty} \lambda_k (Y^1_k,F^1_k)\Big\Vert_\H
\leq  \frac{m C}{1-e_0(\widetilde\lambda_0)}  e^{C\sqrt{\alpha}} (1+M)^{C\sqrt{\beta}} \sqrt{E_n} 
\end{equation}
The convergence to $0$ with order greater than or equal to $1+s$ after a finite number of iterations follows from Lemma \ref{step1}.

\begin{remark}\label{rem_limit}
The estimate \eqref{estim_coercivity} is a kind of coercivity property for the functional $E$.
We emphasize, in view of the non-uniqueness of the zeros of $E$, that an estimate (similar to \eqref{estim_coercivity}) of the form  $\Vert (\overline{y},\overline{f})-(y,f)\Vert_\H \leq C \sqrt{E(y,f)}$ does not hold for every $(y,f)\in\mathcal{A}$. We also insist on the fact that the sequence $(y_k,f_k)_{k\in\N}$ and its limit $(\overline{y},\overline{f})$ are uniquely determined by the initialization $(y_0,f_0)$ and by our selection criterion for the control $F^1$. 
\end{remark}

\begin{step}\label{step7}
If $(2+\frac{1}{s})C\sqrt{\beta}<1$ whenever $s\in(0,1]$, and if $2 \Vert g^\prime\Vert_\infty C^2 e^{C\sqrt{\Vert g^{\prime}\Vert_\infty}} < 1$ whenever $s=0$, then there exists $M>0$ sufficiently large (depending on the initialization $(y_0,f_0)$, on $\alpha$ and on $\beta$) such that the \emph{a priori} assumption \eqref{apriori} is indeed satisfied.
\end{step}

Let us summarize what we have done, under the growth condition \eqref{growth_condition} and under the \emph{a priori} assumption \eqref{apriori}.
By \eqref{lsq1}, we have $(y_k,f_k)=(y_0,f_0)-\sum_{j=0}^{k-1} \lambda_j (Y^1_j,F^1_j)$, and then, using \eqref{estiminftyA} and Step \ref{step5}, we get the \emph{a posteriori} estimate
\begin{equation}\label{estiyk}
\Vert y_k\Vert_\infty \leq C \Vert (y_k,f_k)\Vert_\H
\leq C \Vert (y_0,f_0)\Vert_\H + m C^2 e^{C\sqrt{\alpha}} (1+M)^{C\sqrt{\beta}}  \frac{\sqrt{E_0} }{1-e_0(\widetilde\lambda_0)}  
\end{equation}
with the agreement that $\alpha=\Vert g^{\prime}\Vert_\infty$ and $\beta=0$ if $s=0$.
Hence, to prove that the \emph{a priori} assumption \eqref{apriori} is satisfied, it suffices to choose $M>0$ large enough so that the right-hand side of \eqref{estiyk} is less than or equal to $M$.

Recalling that $c = [g^\prime]_s \, C^{2+s} e^{(1+s)C\sqrt{\alpha}} (1+M)^{(1+s)C\sqrt{\beta}}$ for $s\in(0,1]$ and $c=2 \Vert g^\prime\Vert_\infty C^2  e^{C\sqrt{\Vert g^{\prime}\Vert_\infty}}$ for $s=0$,
we infer from the proof of Step \ref{step1} (in particular, from \eqref{ptildek}) that:

\medskip

\noindent $\bullet$\ \ 
For any $s\in[0,1]$, if 
\begin{equation}\label{provided<1}
(1+s) \, c \, E_0^{\frac{s}{2}} < 1
\end{equation}
then $e_0(\widetilde\lambda_0) = c E_0^{\frac{s}{2}}$ and thus, by \eqref{estiyk},
\begin{equation}\label{aposteriori_s>0_petit}
\Vert y_k\Vert_\infty 
\leq C \Vert (y_0,f_0)\Vert_\H +  C^{\frac{s}{1+s}} \, \frac{m}{[g^\prime]_s^{\frac{1}{1+s}}} c^{\frac{1}{1+s}} \frac{\sqrt{E_0} }{1- c E_0^{\frac{s}{2}}} .
\end{equation}
These estimates include the case $s=0$ (with $[g^\prime]_0 = 2\Vert g^{\prime}\Vert_\infty$), in which \eqref{provided<1} is exactly the smallness condition $2 \Vert g^\prime\Vert_\infty C^2  e^{C\sqrt{\Vert g^{\prime}\Vert_\infty}} < 1$.

Here, we choose the minimal real number $M>0$ such that
\begin{equation}\label{M_provided<1}
C \Vert (y_0,f_0)\Vert_\H +  C^{\frac{s}{1+s}} \, \frac{m}{[g^\prime]_s^{\frac{1}{1+s}}} c^{\frac{1}{1+s}} \frac{\sqrt{E_0} }{1- c E_0^{\frac{s}{2}}}   \leq  M . 
\end{equation}
This is possible by assuming that $E_0$ is sufficiently small, because then, there exist real numbers $M$ (which cannot be arbitrarily large) satisfying both \eqref{provided<1} and \eqref{M_provided<1}. This observation follows by inspecting both inequalities, either with $M$ large, or with $E_0$ small.

The above choice of $M$ is implicit and unfortunately cannot be made explicit for any $s$. We can however give explicit expressions when $s\to 0$, as follows. For $s=0$, \eqref{provided<1} is written as $c = 2 \Vert g^\prime\Vert_\infty C^2  e^{C\sqrt{\Vert g^{\prime}\Vert_\infty}} < 1$ (smallness condition) and \eqref{M_provided<1} gives
$$
M_{\vert s=0} =
C \Vert (y_0,f_0)\Vert_\H +   \frac{m \,C^2\,  e^{C\sqrt{\Vert g^{\prime}\Vert_\infty}}}{1- 2 \Vert g^\prime\Vert_\infty C^2  e^{C\sqrt{\Vert g^{\prime}\Vert_\infty}}} \sqrt{E_0}  . 
$$
Now, when $s\to 0$, $s>0$, we must have $\beta\rightarrow 0$, and then, taking equivalents, \eqref{provided<1} gives
$$
[g^\prime]_s C^2 e^{C\sqrt{\alpha}} E_0^{\frac{s}{2}} < 1
$$
while \eqref{M_provided<1} gives
$$
M_{s\simeq 0} = C \Vert (y_0,f_0)\Vert_\H +  \frac{m \,C^2\,  e^{C\sqrt{\alpha}}}{1- [g^\prime]_s C^2  e^{C\sqrt{\alpha}}} \sqrt{E_0}
$$
which is in accordance with the case $s=0$.

With this choice, \eqref{provided<1} gives a smallness condition on the initialization when $s>0$.

\medskip

\noindent $\bullet$\ \ 
For any $s\in(0,1]$, if
\begin{equation}\label{provided>1}
(1+s) \, c \, E_0^{\frac{s}{2}} \geq 1
\end{equation}
(now $s=0$ is excluded) then
$\frac{1}{1-e_0(\widetilde\lambda_0)} = \frac{(1+s)^{1+\frac{1}{s}}}{s} c^{\frac{1}{s}} \sqrt{E_0}$
and thus, by \eqref{estiyk},
\begin{equation}\label{aposteriori_s>0}
\Vert y_k\Vert_\infty 
\leq C \Vert (y_0,f_0)\Vert_\H + C^{\frac{s}{1+s}} \,  \frac{m(1+s)^{1+\frac{1}{s}}}{s [g^\prime]_s^{\frac{1}{1+s}} } \, c^{\frac{1+2s}{s(1+s)}}  E_0 .
\end{equation}
Here, we choose $M>0$ large enough such that
\begin{equation}\label{M_provided>1}
C \Vert (y_0,f_0)\Vert_\H + C^{\frac{s}{1+s}} \,  \frac{m(1+s)^{1+\frac{1}{s}}}{s [g^\prime]_s^{\frac{1}{1+s}} } \, c^{\frac{1+2s}{s(1+s)}}  E_0 \leq M .
\end{equation}
This is possible because there exist (large) real numbers $M$ satisfying \eqref{M_provided>1}. Indeed, taking $M$ large, \eqref{M_provided>1} is of the kind 
$$
\mathrm{Cst}\, M^{(2+\frac{1}{s})C\sqrt{\beta}} E_0 \lesssim M
$$
which has solutions because, by assumption, $(2+\frac{1}{s})C\sqrt{\beta} < 1$.
More precisely, here, we choose the minimal real number $M>0$ such that
\begin{equation*}
\begin{split}
& 1 \leq (1+s) [g^\prime]_s \, C^{2+s} e^{(1+s)C\sqrt{\alpha}} (1+M)^{(1+s)C\sqrt{\beta}} E_0^{\frac{s}{2}}  , \\
& C \Vert (y_0,f_0)\Vert_\H + \frac{m}{s}(1+s)^{1+\frac{1}{s}} [g^\prime]_s^{\frac{1}{s}} C^{3+\frac{2}{s}} \, e^{(2+\frac{1}{s})C\sqrt{\alpha}} (1+M)^{(2+\frac{1}{s})C\sqrt{\beta}} E_0 \leq M .
\end{split}
\end{equation*}
As before, the above choice of $M$ is implicit. We can anyway give explicit formulas when $s\to 0$. Indeed, when $s\simeq 0$, $s>0$, we have then $\beta\to 0$, $\frac{C\sqrt{\beta}}{s}<1$, and \eqref{provided>1} gives
$$
[g^\prime]_s C^2 e^{C\sqrt{\alpha}} M^{C\sqrt{\beta}} E_0^{\frac{s}{2}} \geq 1
$$
while \eqref{M_provided>1} gives
$$
M^{1-\frac{C\sqrt{\beta}}{s}} \geq 
C \Vert (y_0,f_0)\Vert_\H + \frac{me}{s} [g^\prime]_s^{\frac{1}{s}} C^{\frac{2}{s}} \, e^{\frac{1}{s}C\sqrt{\alpha}}  E_0 
$$
and therefore
$$
M_{\vert s\simeq 0} = \max \left(
\frac{1}{ \left( [g^\prime]_s C^2 e^{C\sqrt{\alpha}} E_0^{\frac{s}{2}} \right)^{1/C\sqrt{\beta}} } \ , \ \ 
\left( C \Vert (y_0,f_0)\Vert_\H + \frac{me}{s} [g^\prime]_s^{\frac{1}{s}} C^{\frac{2}{s}} \, e^{\frac{1}{s}C\sqrt{\alpha}}  E_0  \right)^{s / (s-C\sqrt{\beta})}
\right) .
$$

\section{Conclusion and further comments} \label{sec:remarks}
Exact controllability of \eqref{eq:wave-NL} has been established in \cite{zuazua93}, under a growth condition on $g$, by means of a Leray-Schauder fixed point argument that is not constructive. In this paper, under a slightly stronger growth condition and under the additional assumption that $g^\prime$ is uniformly H\"older continuous with exponent $s\in[0,1]$, we have designed an explicit algorithm and proved its convergence of a controlled solution of \eqref{eq:wave-NL}. Moreover, the convergence is super-linear of order greater than or equal to $1+s$ after a finite number of iterations. 
In turn, our approach gives a new and constructive proof of the exact controllability of \eqref{eq:wave-NL}. 

\medskip

Several comments are in order.

\paragraph{Minimization functional.}
Among all possible admissible controlled pair $(y,v)\in \A_0$, we have selected the solution $(Y_1,F_1)$ of \eqref{wave-Y1} that minimizes the functional $J(v)=\Vert v\Vert^2_{2,q_T}$. This choice has led to the estimate \eqref{estimateF1Y1} which is one of the key points of the convergence analysis. The analysis remains true when one considers the quadratic functional $J(y,v)=\Vert w_1  v\Vert^2_{2,q_T} + \Vert w_2  y\Vert^2_2$ for some positive weight functions $w_1$ and $w_2$ (see for instance \cite{cindea_efc_munch_2013}).

\paragraph{Newton method.}
Defining $F:\mathcal{A}\to L^2(Q_T)$ by $F(y,f):=(\partial_{tt}y-\partial_{xx}y + g(y)-f1_\omega)$, we have $E(y,f)=\frac{1}{2}\Vert F(y,f)\Vert_{L^2(Q_T)}^2$ and we observe that, for $\lambda_k=1$, the algorithm \eqref{lsq1}-\eqref{lsq2} coincides with the Newton algorithm applied to $F$ (see \ref{Newton-nn}). This explains the super-linear convergence property obtained in Theorem \ref{mainthm}, in particular the quadratic convergence when $s=1$. Optimizing the parameter $\lambda_k$ gives a global convergence property of the algorithm and leads to the so-called damped Newton method applied to $F$. For this method, global convergence is usually achieved with linear order under general assumptions (see \cite[Theorem 8.7]{deuflhard}). As far as we know, damped type Newton methods have been little applied to partial differential equations in the literature. We mention \cite{lemoinemunch_time, saramito} in the context of fluid mechanics.   

\paragraph{Another variant.}
To simplify, let us take $\lambda_k=1$, as in the standard Newton method. Then, for each $k\in\N$, the optimal pair $(Y_k^1,F_k^1)\in \mathcal{A}_0$ is such that the element $(y_{k+1},f_{k+1})$ minimizes over $\A$ the functional $(z,v)\to J(z-y_k,v-f_k)$ with $J(z,v):=\Vert v\Vert_{2,q_T}$ (control of minimal $L^2(q_T)$ norm). 
Alternatively, we may select the pair $(Y_k^1,F_k^1)$ so that the element $(y_{k+1},f_{k+1})$ minimizes the functional $(z,v)\to J(z,v)$. This leads to the sequence $(y_k,f_k)_{k\in\N}$ defined by 
 \begin{equation}
\label{eq:wave_lambdakequal1}
\left\{
\begin{aligned}
& \partial_{tt}y_{k+1} - \partial_{xx}y_{k+1} +  g^{\prime}(y_k) y_{k+1} = f_{k+1} 1_{\omega}+g^\prime(y_k)y_k-g(y_k) & \textrm{in}\  Q_T,\\
& y_k=0,  & \textrm{on}\  \Sigma_T, \\
& (y_{k+1}(\cdot,0), \partial_t y_{k+1}(\cdot,0))=(u_0,u_1) & \textrm{in}\  \Omega.
\end{aligned}
\right.
\end{equation}
In this case, for every $k\in\N$, $(y_k,f_k)$ is a controlled pair for a linearized wave equation, while, in the case of the algorithm \eqref{lsq1}-\eqref{lsq2}, $(y_k,f_k)$ is a sum of controlled pairs $(Y^1_j,F^1_j)$ for $0\leq j\leq k$.
This formulation used in \cite{EFC-AM-2012} is different and the convergence analysis (at least in the least-squares setting) does not seem to be straightforward because the term $g^\prime(y_k)y_k-g(y_k)$ is not easily bounded in terms of $\sqrt{E(y_k,f_k)}$.

\paragraph{Local controllability when removing the growth condition \eqref{growth_condition}.}
Let us remove the growth condition \eqref{growth_condition} on $g^\prime$. We have the following convergence result, under the assumption that $E(y_0,f_0)$ is small enough.

\begin{proposition}\label{prop_smallness}
Assume that $g\in \mathcal{C}^{1,s}(\R)$ for some $s\in (0,1]$. 
There exists $C([g^\prime]_s)>0$ such that, if $E(y_0,f_0)\leq C([g^\prime]_s)$, then the sequence $(y_k,f_k)_{k\in \N}$ in $\mathcal{A}$ defined in \eqref{lsq1} converges to $(\overline{y},\overline{f})\in\mathcal{A}$, where $\overline{f}$ is a null control for $\overline{y}$ solution of  \eqref{eq:wave-NL}. Moreover, there exists $k_0\in \N$ such that the sequence $(\Vert (\overline{y},\overline{f})-(y_k,f_k)\Vert_\H )_{k\geq k_0}$ is decreasing and converges to $0$ at least with order $1+s$.
\end{proposition}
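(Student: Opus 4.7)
The plan is to \emph{localize} the argument of Theorem \ref{mainthm}. Without \eqref{growth_condition}, we lose the global polynomial-logarithmic control of $\Vert g^\prime(y_k)\Vert_\infty$, but since $g^\prime \in \mathcal{C}^0(\R)$ it is locally bounded: any a priori bound $\Vert y_k\Vert_\infty \leq M_0$ automatically yields $\Vert g^\prime(y_k)\Vert_\infty \leq A_0 := \sup_{|x|\leq M_0} |g^\prime(x)|<+\infty$. Smallness of $E_0 := E(y_0,f_0)$ will then force the iterates to stay within the ball of radius $M_0$ via a bootstrap argument, in the spirit of the classical local convergence theory for Newton's method.

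Concretely, I would fix $M_0 := C \Vert (y_0,f_0)\Vert_\H + 1$ so that $\Vert y_0\Vert_\infty \leq M_0 - 1$ by \eqref{estiminftyA}, and introduce the \emph{localized} constant
$$
c_0 := [g^\prime]_s \, C^{2+s} e^{(1+s) C \sqrt{A_0}},
$$
playing the role of the constant $c$ from \eqref{defc} restricted to $[-M_0,M_0]$. Under the a priori assumption $\Vert y_k\Vert_\infty \leq M_0$ for every $k$, all of Proposition \ref{propo1} applies with $K(y_k)\leq c_0$. I would then impose the smallness condition $(1+s)c_0 E_0^{s/2}<1$. By the analysis of Step \ref{step1} of the proof of Theorem \ref{mainthm}, this places us in the favorable branch of the dichotomy there: $E_{k+1}\leq c_0^2 E_k^{1+s}$ for every $k \in \N$, so $(E_k)$ is decreasing, converges to $0$ with order at least $1+s$ from the very first iteration (so one may take $k_0 = 0$ in the favorable case), and $e_0(\widetilde\lambda_0)=c_0 E_0^{s/2}<1$.

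To close the bootstrap, I would carry over Step \ref{step5}: from $(y_k,f_k)-(y_0,f_0)=-\sum_{j=0}^{k-1}\lambda_j(Y_j^1,F_j^1)$, the estimates \eqref{estimateF1Y1_A0} and \eqref{estiminftyA} give
$$
\Vert y_k\Vert_\infty \leq \Vert y_0\Vert_\infty + \frac{m\,C^2\,e^{C\sqrt{A_0}}}{1-c_0 E_0^{s/2}}\sqrt{E_0}.
$$
It then suffices to choose the threshold $C([g^\prime]_s)>0$ small enough that both $(1+s) c_0 E_0^{s/2} < 1$ and $\frac{m\,C^2\,e^{C\sqrt{A_0}}}{1-c_0 E_0^{s/2}}\sqrt{E_0} \leq 1$ hold whenever $E_0 \leq C([g^\prime]_s)$; the right-hand side above is then bounded by $M_0$, validating the a priori assumption. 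Once the a priori bound is secured, Steps \ref{step2}--\ref{step6} of the proof of Theorem \ref{mainthm} transfer verbatim and yield the convergence of $(y_k,f_k)$ in $\H$ to a controlled pair $(\overline{y},\overline{f})\in\A$ for \eqref{eq:wave-NL}, with the coercivity-type estimate $\Vert(\overline{y},\overline{f})-(y_k,f_k)\Vert_\H \leq \frac{m\,C\,e^{C\sqrt{A_0}}}{1-c_0 E_0^{s/2}}\sqrt{E_k}$ and therefore decay of order at least $1+s$. The main obstacle is the self-referential nature of the choice of $M_0$ and $E_0$: $A_0$ (and hence $c_0$) depends on $M_0$, which in turn depends on the initialization, so the threshold $C([g^\prime]_s)$ in fact also depends on $(y_0,f_0)$; the two-step choice, setting $M_0$ first from \eqref{estiminftyA} and only then restricting $E_0$ to the admissible range determined by $A_0(M_0)$, is what decouples the two quantifiers.
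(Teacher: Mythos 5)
The paper itself gives no proof of this proposition (it only says the proof is ``a variant of the arguments given in this paper''), and your localization-plus-bootstrap argument is precisely the natural such variant: replace the global growth condition \eqref{growth_condition} by the local bound $A_0=\sup_{\vert x\vert\leq M_0}\vert g'(x)\vert$, note that the smallness condition $(1+s)c_0E_0^{s/2}<1$ puts you directly in the first (contractive) branch of Step \ref{step1} with $k_0=0$ and $\widetilde\lambda_k=1$, and close the a priori bound via the tail estimate of Step \ref{step5}. This is correct in its essentials. Two small points deserve care. First, the displayed bootstrap inequality should start from $C\Vert(y_0,f_0)\Vert_\H$ (as in \eqref{estiyk}), not from $\Vert y_0\Vert_\infty$; this is harmless with your choice $M_0=C\Vert(y_0,f_0)\Vert_\H+1$, but as written the inequality does not follow from \eqref{estiminftyA}. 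Second, your closing observation is accurate and worth making explicit: since $A_0$ (hence $c_0$) depends on $M_0$, which depends on $\Vert(y_0,f_0)\Vert_\H$, the threshold you obtain depends on the initialization and on $\sup_{\vert x\vert\leq M_0}\vert g'(x)\vert$, not only on $[g']_s$ as the literal statement suggests; this is a looseness in the proposition rather than in your argument, and your two-step ordering of the quantifiers (fix $M_0$ first, then restrict $E_0$) is the right way to avoid circularity. A last cosmetic gap: the statement also asserts that $(\Vert(\overline y,\overline f)-(y_k,f_k)\Vert_\H)_{k\geq k_0}$ is \emph{decreasing}, which does not follow merely from domination by $\mathrm{Cst}\sqrt{E_k}$ with $(E_k)$ decreasing; if you want that part too, you need to compare consecutive tails $\sum_{j\geq k}\lambda_j\Vert(Y_j^1,F_j^1)\Vert_\H$ directly, but the order-$(1+s)$ convergence you establish is exactly what the footnoted definition requires.
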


The proof is a variant of the arguments given in this paper. We do not provide any details.
In the case $g(0)=0$, the smallness assumption on $E(y_0,f_0)$ is satisfied as soon as $\Vert(u_0,u_1)\Vert_{\boldsymbol{V}}$  is small. Therefore, the convergence result stated in Proposition \ref{prop_smallness} is equivalent to the local controllability property for \eqref{eq:wave-NL}.
Proposition \ref{prop_smallness} can actually be seen as a consequence of the usual convergence of the Newton method: when $E(y_0,f_0)$ is small enough, i.e., when the initialization is close enough to the solution, then $\lambda_k=1$ for every $k\in\N$ and we recover the standard Newton method.

\paragraph{Multi-dimensional case.}
Let $\Omega$ is a bounded subset of $\R^d$, $1\leq d\leq 3$, and let $\omega$ be a nonempty open subset of $\Omega$. Assume that the triple $(\Omega,\omega,T)$ satisfies the multiplier condition introduced in \cite{JLL88}. 
Then, we conjecture that Theorem \ref{mainthm} remains true in this context, strengthening the growth condition on $g$ into $\vert g^{\prime}(x)\vert \leq \alpha + \beta \ln^{1/2}(1+\vert x\vert)$, for every $x\in \R$ and some $\beta>0$ small enough. Establishing the result should require to use estimates of \cite{Li_Zhang_2000, Zhang_2000} (see also \cite{lemoine_gayte_munch} for the case of a semilinear heat equations).

\paragraph{Boundary control.}
In this paper we have taken internal controls. Our approach may also be extended, with few modifications, to boundary controls considered in particular in \cite{zuazua91}. We leave this issue open.

\appendix

\section{Appendix: controllability results for the linearized wave equation}\label{sec:linearizedwave}

We recall in this appendix some a priori estimates for the linearized wave equation with potential in $L^\infty(Q_T)$ and source term in $L^2(Q_T)$. 

\begin{lemma}\label{controllability_result}
Let $A\in L^{\infty}(Q_T)$, let $B\in L^2(Q_T)$ and let $(z_0,z_1)\in \boldsymbol{V}$. Assume that $T>2\max(\ell_1,1-\ell_2)$. There exists $u\in L^2(q_T)$ such that the solution of 
\begin{equation}
\label{wave_z}
\left\{
\begin{aligned}
& \partial_{tt}z - \partial_{xx}z +  A z  = u 1_{\omega} + B & \textrm{in}\  Q_T,\\
& z=0 & \textrm{on}\  \Sigma_T, \\
& (z(\cdot,0),\partial_t z(\cdot,0))=(z_0,z_1) &\textrm{in}\  \Omega,
\end{aligned}
\right.
\end{equation}
satisfies $(z(\cdot,T),\partial_t z(\cdot,T))=(0,0)$ in $\Omega$. Moreover, the unique control $u$ minimizing the $L^2(q_T)$ norm and its corresponding solution $z$ satisfy 
\begin{equation}\label{estimate_u_z}
\Vert u\Vert_{2,q_T} + \Vert (z,\partial_t z)\Vert_{L^\infty(0,T;\boldsymbol{V})}\leq C \Big(\Vert B\Vert_2 \, e^{(1+C)\sqrt{\Vert A\Vert_\infty}} + \Vert z_0,z_1\Vert_{\boldsymbol{V}}\Big) e^{C\sqrt{\Vert A\Vert_\infty}}
\end{equation}
for some constant $C>0$ only depending on $\Omega$ and $T$.
\end{lemma}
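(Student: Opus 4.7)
The approach is to combine the Hilbert Uniqueness Method (HUM) with the sharp observability inequality for the wave equation with $L^\infty$-potential (due to Zuazua) and a matching sharp forward energy estimate. Both refinements replace the naive Gronwall factor $e^{C\Vert A\Vert_\infty T}$ by the $e^{C\sqrt{\Vert A\Vert_\infty}}$ factor that appears in \eqref{estimate_u_z}.

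\textbf{Step 1: decomposition.} Write $z=z^{(1)}+z^{(2)}$, where $z^{(1)}$ solves \eqref{wave_z} with $u\equiv 0$, source $B$ and initial data $(z_0,z_1)$, and $z^{(2)}$ solves \eqref{wave_z} with $B\equiv 0$, zero initial data and control $u$. Requiring $(z(\cdot,T),\partial_t z(\cdot,T))=(0,0)$ is then equivalent to steering $z^{(2)}$ from $(0,0)$ to $-(z^{(1)}(\cdot,T),\partial_t z^{(1)}(\cdot,T))\in\boldsymbol V$. By time reversibility of the wave operator, this reduces to a classical null-controllability problem for $\partial_{tt}-\partial_{xx}+A$, with prescribed initial datum in $\boldsymbol V$ and no source term.

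\textbf{Step 2: HUM and observability.} The reduced problem admits a unique minimal $L^2(q_T)$-norm control $u$ obtained by HUM: minimize over terminal data $(\ph_0,\ph_1)$ of the adjoint wave equation the functional $J(\ph_0,\ph_1)=\frac{1}{2}\int_0^T\!\!\int_\omega |\ph|^2-\langle(\ph(\cdot,0),\partial_t\ph(\cdot,0)),(\zeta_0,\zeta_1)\rangle$ and set $u=\ph\mathbf 1_\omega$. Coercivity of $J$, and hence existence of the minimizer with the sharp bound
$$\Vert u\Vert_{2,q_T}\leq C\,e^{C\sqrt{\Vert A\Vert_\infty}}\,\Vert(\zeta_0,\zeta_1)\Vert_{\boldsymbol V},$$
is precisely the observability inequality proved in \cite{zuazua93}, valid under the geometric condition $T>2\max(\ell_1,1-\ell_2)$. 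The square-root dependence on $\Vert A\Vert_\infty$ is obtained from the standard multiplier identity for the adjoint system, combined with an absorbing argument on the zero-order term (finite speed of propagation in 1D allows treating $A$ as a controlled perturbation, and interpolation between the $L^2$ and energy norms yields the exponent $\tfrac12$).

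\textbf{Step 3: forward estimate with $\sqrt{\Vert A\Vert_\infty}$ dependence.} I would then bound $\Vert(z^{(1)}(\cdot,T),\partial_t z^{(1)}(\cdot,T))\Vert_{\boldsymbol V}$ by $C\,e^{C\sqrt{\Vert A\Vert_\infty}}(\Vert B\Vert_2+\Vert(z_0,z_1)\Vert_{\boldsymbol V})$. A direct energy estimate with Gronwall only gives $e^{C\Vert A\Vert_\infty T}$; the sharp $\sqrt{\Vert A\Vert_\infty}$ factor follows either from the d'Alembert representation combined with a contraction iteration on strips of width $\sim \Vert A\Vert_\infty^{-1/2}$, or equivalently from the same multiplier/interpolation argument used for Step 2. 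Setting $(\zeta_0,\zeta_1)=-(z^{(1)}(\cdot,T),\partial_t z^{(1)}(\cdot,T))$ and combining with Step 2 yields the stated $L^2(q_T)$ bound on $u$. The $L^\infty(0,T;\boldsymbol V)$ bound on $(z,\partial_t z)$ then comes from applying once more the sharp forward estimate to the full equation with right-hand side $u\mathbf 1_\omega+B$ and initial datum $(z_0,z_1)$.

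\textbf{Main obstacle.} The only non-routine ingredient is the $\sqrt{\Vert A\Vert_\infty}$ exponent, both in the observability constant and in the forward energy bound. Naive Gronwall arguments give $e^{C\Vert A\Vert_\infty T}$, which would be insufficient to close the growth condition \eqref{growth_condition} in the main theorem via \eqref{estimateF1Y1}. Since the refined estimate is by now classical for 1D semilinear wave equations, my exposition would state it precisely, cite \cite{zuazua93} (and its refinement \cite{Cannarsa_Loreti_Komornik2002}) for the proof of the sharp constants, and focus the presentation on the HUM decomposition of Steps 1--2 and on the way the two $\sqrt{\Vert A\Vert_\infty}$ factors combine to produce exactly the form of \eqref{estimate_u_z}.
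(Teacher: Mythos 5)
Your proposal is correct and rests on the same two key ingredients as the paper's proof, namely HUM together with the sharp observability inequality and the sharp energy estimate with constant $e^{C\sqrt{\Vert A\Vert_\infty}}$ from \cite{zuazua93}; but it organizes them differently. The paper does not split $z$: it keeps the source $B$ inside the HUM functional, minimizing $J(\ph_0,\ph_1)=\frac{1}{2}\iint_{q_T}\ph^2+\iint_{Q_T}B\ph-\langle(z_0,z_1),(\ph_0,\ph_1)\rangle_{\boldsymbol{V},\boldsymbol{H}}$ over $\boldsymbol{H}=L^2(\Omega)\times H^{-1}(\Omega)$, and extracts $\Vert u\Vert_{2,q_T}^2\leq \Vert B\Vert_2\Vert\ph\Vert_2+\Vert(z_0,z_1)\Vert_{\boldsymbol{V}}\Vert(\ph_0,\ph_1)\Vert_{\boldsymbol{H}}$ from the Euler--Lagrange equation; the term $\Vert\ph\Vert_2$ is then controlled by the \emph{adjoint} energy estimate (\cite[Lemma 2]{zuazua93}) and $\Vert(\ph_0,\ph_1)\Vert_{\boldsymbol{H}}$ by observability (\cite[Theorem 4]{zuazua93}). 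Your superposition $z=z^{(1)}+z^{(2)}$ instead trades the adjoint energy estimate for a \emph{forward} energy estimate on $z^{(1)}$ at time $T$ (the paper uses \cite[Lemma 1]{zuazua93} only at the very end, for the bound on $(z,\partial_t z)$); both routes close, and yours yields \eqref{estimate_u_z} with the $\Vert z_0,z_1\Vert_{\boldsymbol{V}}$ term carrying an extra exponential factor, which is harmless since $C$ only depends on $\Omega$ and $T$ and only the overall form $Ce^{C\sqrt{\Vert A\Vert_\infty}}$ is used downstream. One caveat: your parenthetical explanation of the $\sqrt{\Vert A\Vert_\infty}$ exponent (``interpolation between the $L^2$ and energy norms'') is not how it is actually obtained in 1D — Zuazua's argument is a sidewise (space-as-evolution) energy estimate iterated on strips, which is consistent with the alternative you mention; since you defer the proof to \cite{zuazua93}, exactly as the paper does, this does not affect the validity of your argument, but you should not present the interpolation heuristic as a proof.
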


\begin{proof}
The proof is based on estimates obtained in \cite{zuazua93}. The control of minimal $L^2(q_T)$ norm is given by $u=\ph 1_{\omega}$
where $\ph$ solves the adjoint equation 
\begin{equation}
\label{wave_adjoint}
\left\{
\begin{aligned}
& \partial_{tt}\ph - \partial_{xx}\ph +  A \ph  = 0 & \textrm{in}\  Q_T,\\
& \ph=0 & \textrm{on}\  \Sigma_T, \\
& (\ph(\cdot,0),\partial_t\ph(\cdot,0))=(\ph_0,\ph_1) &\textrm{in}\  \Omega,
\end{aligned}
\right.
\end{equation}
where $(\ph_0,\ph_1)\in \boldsymbol{H}:=L^2(\Omega)\times H^{-1}(\Omega)$ is the unique minimizer of 
$$
J(\ph_0,\ph_1):=\frac{1}{2}\int\!\!\!\!\int_{q_T} \ph^2 +\int\!\!\!\!\int_{Q_T} B \ph - \langle(z_0,z_1),(\ph_0,\ph_1)\rangle_{\boldsymbol{V},\boldsymbol{H}}
$$
with $\langle(z_0,z_1),(\ph_0,\ph_1)\rangle_{\boldsymbol{V},\boldsymbol{H}}:=\langle z_0,\ph_1\rangle_{H_0^1(\Omega),H^{-1}(\Omega)}-(z_1,\ph_0)_{L^2(\Omega),L^2(\Omega)}$. In particular, the control $v$ satisfies the optimality condition
$$
\int\!\!\!\!\int_{q_T} \ph\,\overline{\ph} +\int\!\!\!\!\int_{Q_T} B \overline{\ph} - \langle(z_0,z_1),(\overline{\ph}_0,\overline{\ph}_1)\rangle_{\boldsymbol{V},\boldsymbol{H}}=0 \qquad\forall (\overline{\ph}_0,\overline{\ph}_1)\in \boldsymbol{H}
$$
from which we deduce that $\Vert u\Vert_{2,q_T}^2 \leq \Vert B\Vert_2 \Vert \ph \Vert_2 + \Vert (z_0,z_1)\Vert_{\boldsymbol{V}}\Vert (\ph_0,\ph_1)\Vert_{\boldsymbol{H}}$.
From \cite[Lemma 2]{zuazua93}, we get 
$$
\Vert (\ph,\partial_t\ph)\Vert^2_{L^\infty(0,T;\boldsymbol{H})}\leq B_1 \Vert (\ph_0,\ph_1)\Vert^2_{\boldsymbol{H}} (1+\Vert A\Vert^2_\infty)e^{B_2 \sqrt{\Vert A\Vert_{\infty}}}
$$
for some constants $B_1,B_2>0$, and it follows that 
$\Vert \ph \Vert^2_2\leq T B_1 \Vert (\ph_0,\ph_1)\Vert^2_{\boldsymbol{H}} (1+\Vert A\Vert^2_{\infty})e^{B_2 \sqrt{\Vert A\Vert_{\infty}}}$.
Moreover, from \cite[Theorem 4]{zuazua93}, there exists $C>0$ such that $\Vert (\ph_0,\ph_1)\Vert^2_{\boldsymbol{H}}\leq Ce^{C \sqrt{\Vert A\Vert_{\infty}}}\Vert \ph\Vert^2_{2,q_T}$.
Combining these inequalities, we get 
$$
\Vert u\Vert_{L^2(q_T)}\leq \Big(\Vert B\Vert_2 \sqrt{T}\sqrt{B_1}(1+\Vert A\Vert^2_\infty)^{1/2} e^{\frac{B_2}{2}\sqrt{\Vert A\Vert_\infty}}+\Vert (z_0,z_1)\Vert_{\boldsymbol{V}}\Big)\sqrt{C}e^{\frac{C}{2}\sqrt{\Vert A\Vert_{\infty}}}.
$$
Using the inequality $(1+s^2)^{1/2} \leq e^{\sqrt{s}}$ for every $s\geq 0$, we get the result. 
Then, from \cite[Lemma 1]{zuazua93}, we have
$$
\Vert (z,\partial_t z)\Vert^2_{L^\infty(0,T;\boldsymbol{V})}\leq D_1\Big(  \Vert (z_0,z_1)\Vert^2_{\boldsymbol{H}}(1+\Vert A\Vert_\infty)+ \Vert u 1_{\omega}+B\Vert^2_2\Big)
e^{D_2\sqrt{\Vert A\Vert_\infty}}
$$
for some constants $D_1,D_2>0$, and we infer that 
$$
\Vert (z,\partial_t z)\Vert^2_{L^\infty(0,T;\boldsymbol{V})}\leq D_1\Big(  \Vert (z_0,z_1)\Vert^2_{\boldsymbol{H}}(2+\Vert A\Vert_\infty)+ 2\Vert B\Vert^2_2\Big(1+TB_1\big(1+\Vert A\Vert_{\infty}^2\big)e^{B_2\sqrt{\Vert A\Vert_{\infty}}}\Big) \Big)
e^{D_2\sqrt{\Vert A\Vert_\infty}}.
$$
Using that $(1+s)^{1/2} \leq e^{\sqrt{s}}$ and  $(1+s^2) \leq e^{2\sqrt{s}}$ for every $s\geq 0$, we get the estimate.  
\end{proof}

We next discuss some properties of the operator $K: L^\infty(Q_T)\to L^\infty(Q_T)$ defined by $K(\xi)=y_{\xi}$, a null controlled solution of the linear boundary value problem \eqref{NL_z} with the control $f_{\xi}$ of minimal $L^2(q_T)$ norm. Lemma \ref{controllability_result} with $B=-g(0)$ gives
\begin{equation}
\label{estimateK}
\Vert (y_{\xi},\partial_t y_{\xi})\Vert_{L^\infty(0,T;\boldsymbol{V})}\leq C\Big(\Vert u_0,u_1\Vert_{\boldsymbol{V}}+\Vert g(0)\Vert_2 e^{(1+C)\sqrt{\Vert \widehat{g}(\xi)\Vert_{\infty}}} \Big)e^{C\sqrt{\Vert \widehat{g}(\xi)\Vert_\infty}}.
\end{equation}
As in \cite{zuazua93}, the growth condition \eqref{growth_condition} implies that there exists $d>0$ such that $\Vert \widehat{g}(y)\Vert_\infty\leq d+\beta \ln^2(1+\Vert y\Vert_\infty)$
for every $y\in L^\infty(Q_T)$, and it follows that $e^{C\sqrt{\Vert \hat{g}(\xi)\Vert_\infty}}\leq e^{C\sqrt{d}} (1+\Vert \xi\Vert_{\infty})^{C\sqrt{\beta}}$. Using \eqref{estimateK}, we infer that
$$
 \Vert y_{\xi}\Vert_{\infty}\leq C\Big(\Vert u_0,u_1\Vert_{\boldsymbol{V}}+\Vert g(0)\Vert_2\Big)e^{(1+2C)\sqrt{d}} (1+\Vert \xi\Vert_{\infty})^{(1+2C)\sqrt{\beta}}.
$$
Taking $\beta$ small enough so that $(1+2C)\sqrt{\beta}<1$, we conclude that there exists $M>0$ such that $\Vert \xi\Vert_\infty\leq M$ implies $\Vert K(\xi)\Vert_\infty\leq M$. This is the argument of \cite{zuazua93}. Note that, in contrast to $\beta$, $M$ depends on $\Vert u_0,u_1\Vert_{\boldsymbol{V}}$ (and increases with $\Vert u_0,u_1\Vert_{\boldsymbol{V}}$).

The following result gives an estimate of the difference of two controlled solutions. 

\begin{lemma}\label{gap}
Let $a,A\in L^\infty(Q_T)$ and let $B\in L^2(Q_T)$. Let $u$ and $v$ be the null controls of minimal $L^2(q_T)$ norm for $y$ and $z$ respectively solutions of 
\begin{equation}
\label{wave_y}
\left\{
\begin{aligned}
& \partial_{tt}y - \partial_{xx}y +  A y  = u 1_{\omega} + B & \textrm{in}\  Q_T,\\
& y=0 & \textrm{on}\  \Sigma_T, \\
& (y(\cdot,0),\partial_t y(\cdot,0))=(u_0,u_1) &\textrm{in}\  \Omega,
\end{aligned}
\right.
\end{equation}
and 
\begin{equation}
\label{wave_zz}
\left\{
\begin{aligned}
& \partial_{tt}z - \partial_{xx}z +  (A+a)z   = v 1_{\omega} + B & \textrm{in}\  Q_T,\\
& z=0 & \textrm{on}\  \Sigma_T, \\
& (z(\cdot,0),\partial_t z(\cdot,0))=(u_0,u_1) &\textrm{in}\  \Omega .
\end{aligned}
\right.
\end{equation}
Then, 
$$
\begin{aligned}
\Vert y-z\Vert_{L^\infty(Q_T)} 
& \leq C^4 \Vert a \Vert_\infty e^{C\sqrt{\Vert A+a\Vert_\infty}} e^{(2+3C)\sqrt{\Vert A\Vert_\infty}}\Big(\Vert B\Vert_2 e^{(1+C)\sqrt{\Vert A\Vert_\infty}} + \Vert u_0,u_1\Vert_{\boldsymbol{V}}\Big)
\end{aligned}
$$
for some constant $C>0$ only depending on $\Omega$ and $T$.
\end{lemma}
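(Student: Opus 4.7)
The plan is to set $w := y - z$, derive a linear wave equation for $w$ with potential $A$ and a source depending on $z$ and on the control difference, and then combine a direct energy estimate with Lemma~\ref{controllability_result}.

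By subtracting \eqref{wave_y} from \eqref{wave_zz}, I find that $w$ satisfies
\begin{equation*}
\partial_{tt}w - \partial_{xx}w + Aw = (u-v)1_\omega + az \quad \text{in } Q_T ,
\end{equation*}
with $w=0$ on $\Sigma_T$ and zero initial and final data. The energy estimate for the wave equation with an $L^\infty$ potential (the one cited from \cite[Lemma~1]{zuazua93} inside the proof of Lemma~\ref{controllability_result}), applied to $w$ with zero initial data, combined with the one-dimensional Sobolev embedding $H^1_0(\Omega)\hookrightarrow L^\infty(\Omega)$, yields
\begin{equation*}
\|w\|_\infty \leq C \big(\|u-v\|_{2,q_T} + \|az\|_2\big) e^{C\sqrt{\|A\|_\infty}} .
\end{equation*}

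The easier of the two resulting terms is $\|az\|_2 \leq \|a\|_\infty T^{1/2} \|z\|_{L^\infty(0,T;L^2)}$, which is controlled by a direct application of Lemma~\ref{controllability_result} to $z$ (potential $A+a$, source $B$):
\begin{equation*}
\|z\|_{L^\infty(0,T;L^2)} \leq C \big(\|B\|_2 e^{(1+C)\sqrt{\|A+a\|_\infty}} + \|(u_0,u_1)\|_{\boldsymbol{V}}\big) e^{C\sqrt{\|A+a\|_\infty}} .
\end{equation*}

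The main obstacle will be bounding $\|u-v\|_{2,q_T}$, the difference between the two minimal-norm controls corresponding to two different potentials. My plan is to use the dual characterization spelled out in the proof of Lemma~\ref{controllability_result}: write $u = \varphi 1_\omega$ and $v = \psi 1_\omega$, where $\varphi,\psi$ solve the adjoint equations with potentials $A$, $A+a$ and with initial data minimizing the respective dual functionals $J$. Subtracting the two Euler--Lagrange identities and testing them against a common test adjoint solution (in particular, observing that $\varphi-\psi$ solves the adjoint equation with potential $A$ and source $a\psi$, with a nonzero difference of initial data), together with the observability inequality $\|(\varphi_0,\varphi_1)\|_{\boldsymbol{H}}^2 \leq C e^{C\sqrt{\|A\|_\infty}} \|\varphi\|_{2,q_T}^2$ of \cite[Theorem~4]{zuazua93}, should yield a perturbation estimate of the form
\begin{equation*}
\|u-v\|_{2,q_T} \leq C \|a\|_\infty \, e^{C\sqrt{\|A+a\|_\infty}} e^{(2+2C)\sqrt{\|A\|_\infty}} \big(\|B\|_2 e^{(1+C)\sqrt{\|A\|_\infty}} + \|(u_0,u_1)\|_{\boldsymbol{V}}\big),
\end{equation*}
in which $\|a\|_\infty$ enters linearly from the perturbation of the potential, and the exponential factors come from stacking two observability constants with one energy estimate, each contributing a factor $e^{C\sqrt{\cdot}}$.

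Combining the three bounds, and absorbing harmless lower-order factors into a single constant, gives the announced estimate with the exponent $2+3C$ on $\sqrt{\|A\|_\infty}$. The hardest technical point is really the perturbation estimate on $\|u-v\|_{2,q_T}$: one must carefully track how many observability and controllability constants stack up, which is precisely what produces the combination of exponents in the final bound (one factor from the energy estimate for $w$, one observability plus one controllability estimate for the control difference, and one controllability estimate for $z$).
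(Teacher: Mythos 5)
Your reduction is correct as far as it goes: with $w=y-z$ one indeed gets $\partial_{tt}w-\partial_{xx}w+Aw=(u-v)1_\omega+az$, the term $\Vert az\Vert_2$ is handled by Lemma~\ref{controllability_result} applied to $z$, and the whole lemma then hinges on a quantitative bound for $\Vert u-v\Vert_{2,q_T}$. But that last bound is precisely the content of the lemma, and you only assert that subtracting the two Euler--Lagrange identities ``should yield'' it. The obstruction you are glossing over is this: the two optimality conditions are posed over \emph{different} families of test functions (adjoint solutions for the potential $A$ in one case, $A+a$ in the other), so there is no ``common test adjoint solution'' to plug in directly; and the difference $\Phi=\varphi_a-\varphi$ of the two optimal adjoint states solves the adjoint equation with potential $A+a$ and a \emph{source} term $-a\varphi$, hence it is not a homogeneous adjoint solution and the observability inequality $\Vert(\varphi_0,\varphi_1)\Vert_{\boldsymbol{H}}^2\leq Ce^{C\sqrt{\Vert A\Vert_\infty}}\Vert\varphi\Vert_{2,q_T}^2$ cannot be applied to it as such. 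This is the missing idea, not a routine verification.

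The paper resolves it by splitting $\Phi=\Psi+\psi$, where $\Psi$ solves the homogeneous adjoint equation with potential $A+a$ and carries the difference of initial data, while $\psi$ carries the source $-a\varphi$ with zero data. The key observation is then that $\Psi 1_\omega$ is itself the \emph{minimal-norm null control} for $Z=z-y$ viewed as a solution of the equation with potential $A+a$ and source $\psi 1_\omega-ay$ (it is the trace on $q_T$ of a homogeneous adjoint solution and it steers $Z$ from $(0,0)$ to $(0,0)$). Hence Lemma~\ref{controllability_result} applies in one shot and bounds $\Vert\Psi\Vert_{2,q_T}$ and $\Vert Z\Vert_{L^\infty(Q_T)}$ by $C\Vert\psi 1_\omega-ay\Vert_2 e^{(1+2C)\sqrt{\Vert A\Vert_\infty}}$; the remaining pieces are an energy estimate for $\psi$ (giving the factor $\Vert a\Vert_\infty\Vert\varphi\Vert_2 e^{C\sqrt{\Vert A+a\Vert_\infty}}$) and the observability inequality applied to the genuine homogeneous adjoint state $\varphi$ to convert $\Vert\varphi\Vert_2$ into $\Vert u\Vert_{2,q_T}$. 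If you incorporate this decomposition, your plan closes; alternatively you could run an abstract perturbation argument on the two strongly convex dual functionals (comparing $Q_A^{-1}$ and $Q_{A+a}^{-1}$ via the observability constants), but that must be written out --- as it stands, the decisive estimate is assumed rather than proved.
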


\begin{proof} 
The controls of minimal $L^2$ norm for $y$ and $z$ are given by $u=\ph 1_\omega$ and 
$v=\ph_a 1_\omega$, where $\ph$ and $\ph_a$ respectively solve the adjoint equations 
\begin{equation*}
\left\{
\begin{aligned}
& \partial_{tt}\ph - \partial_{xx}\ph +  A \ph   = 0 & \textrm{in}\  Q_T,\\
& \ph=0 & \textrm{on}\  \Sigma_T, \\
& (\ph(\cdot,0),\partial_t\ph(\cdot,0))=(\ph_0,\ph_1) &\textrm{in}\  \Omega,
\end{aligned}
\right.
\qquad\quad 
\left\{
\begin{aligned}
& \partial_{tt}\ph_{a} - \partial_{xx}\ph_{a} +  (A+a) \ph_a   = 0 & \textrm{in}\  Q_T,\\
& \ph=0 & \textrm{on}\  \Sigma_T, \\
& (\ph(\cdot,0),\partial_t\ph(\cdot,0))=(\ph_{a,0},\ph_{a,1}) &\textrm{in}\  \Omega,
\end{aligned}
\right.
\end{equation*}
for some appropriate $(\ph_0,\ph_1), (\ph_{a,0},\ph_{a,1})\in \boldsymbol{H}$. Hence $Z:=z-y$ solves 
\begin{equation}
\label{diffZ}
\left\{
\begin{aligned}
& \partial_{tt}Z - \partial_{xx}Z +  (A+a)Z   = \Phi 1_{\omega} -ay & \textrm{in}\  Q_T,\\
& Z=0 & \textrm{on}\  \Sigma_T, \\
& (Z(\cdot,0), \partial_t z(\cdot,0))=(0,0) &\textrm{in}\  \Omega,
\end{aligned}
\right.
\end{equation}
and $\Phi:=(\ph_a-\ph)$ solves
\begin{equation}
 \nonumber
\left\{
\begin{aligned}
& \partial_{tt}\Phi - \partial_{xx}\Phi +  (A+a)\Phi  = -a\ph & \textrm{in}\  Q_T,\\
& \Phi=0 & \textrm{on}\  \Sigma_T, \\
& (\Phi(\cdot,0), \partial_t\Phi(\cdot,0))=(\ph_{a,0}-\ph_0,\ph_{a,1}-\ph_1) &\textrm{in}\  \Omega.
\end{aligned}
\right.
\end{equation}
We decompose $\Phi=\Psi+\psi$ where $\Psi$ and $\psi$ solve respectively 
\begin{equation}
 \nonumber
\left\{
\begin{aligned}
& \partial_{tt}\Psi - \partial_{xx}\Psi +  (A+a)\Psi  = 0 & \textrm{in}\  Q_T,\\
& \Psi=0 & \textrm{on}\  \Sigma_T, \\
& (\Psi(\cdot,0), \partial_t\Psi(\cdot,0))=(\ph_{a,0}-\ph_0,\ph_{a,1}-\ph_1) &\textrm{in}\  \Omega,
\end{aligned}
\right.
\qquad \left\{
\begin{aligned}
& \partial_{tt}\psi - \partial_{xx}\psi +  (A+a)\psi   =  -a\ph & \textrm{in}\  Q_T,\\
& \psi=0 & \textrm{on}\  \Sigma_T, \\
& \psi(\cdot,0), \partial_t\psi(\cdot,0))=(0,0) &\textrm{in}\  \Omega,
\end{aligned}
\right.
\end{equation}
and we deduce that $\Psi1_\omega$ is the control of minimal $L^2$ norm for $Z$ solution of  
\begin{equation}
 \nonumber
\left\{
\begin{aligned}
& \partial_{tt}Z - \partial_{xx}Z +  (A+a)Z   = \Psi 1_{\omega} + \Big(\psi 1_\omega-ay\Big) & \textrm{in}\  Q_T,\\
& Z=0 & \textrm{on}\  \Sigma_T, \\
& (Z(\cdot,0), \partial_t Z(\cdot,0))=(0,0) &\textrm{in}\  \Omega.
\end{aligned}
\right.
\end{equation}
Lemma \ref{controllability_result} implies that 

\begin{equation}\nonumber
\Vert \Psi\Vert_{2,q_T}+ \Vert (Z,\partial_t Z)\Vert_{L^\infty(0,T;\boldsymbol{V})}\leq C \Vert \psi 1_\omega-ay\Vert_2 e^{(1+2C)\sqrt{\Vert A\Vert_\infty}}.
\end{equation}
Moreover, energy estimates applied to $\psi$ give
$
\Vert \psi\Vert_{L^2(q_T)}\leq C\Vert a\Vert_\infty \Vert\ph\Vert _{2}e^{C\sqrt{\Vert A+a\Vert_\infty}}
$
and  
$$
\Vert\ph\Vert _2\leq C \Vert \ph_0,\ph_1\Vert_{\boldsymbol{H}}e^{(1+C)\sqrt{\Vert A\Vert_\infty}}\leq \Big(C e^{(1+C)\sqrt{\Vert A\Vert_\infty}}\Big)^2 \Vert u\Vert_{2,q_T}
$$
using that $\Vert \ph_0,\ph_1\Vert_{\boldsymbol{H}}\leq C e^{C\sqrt{\Vert A\Vert_\infty}}\Vert u\Vert_{2,q_T}$
so that 
$$
\Vert \psi\Vert_{L^2(q_T)}\leq C\Vert a\Vert_\infty e^{C\sqrt{\Vert A+a\Vert_\infty}} \Big(C e^{(1+C)\sqrt{\Vert A\Vert_\infty}}\Big)^2 \Vert u\Vert_{2,q_T}
$$
from which we deduce that
$$
\begin{aligned}
\Vert Z\Vert_{L^\infty(Q_T)} & \leq C \Big(\Vert \psi 1_\omega\Vert + \Vert a\Vert_{L^\infty(Q_T)}\Vert y\Vert_2\Big) e^{(1+2C)\sqrt{\Vert A\Vert_\infty}}\\
& \leq C \Vert a \Vert_\infty \Big(e^{C\sqrt{\Vert A+a\Vert_\infty}} \Big(C e^{(1+C)\sqrt{\Vert A\Vert_\infty}}\Big)^2 \Vert u\Vert_{L^2(q_T)}+\Vert y\Vert_{L^2(Q_T)}\Big)\\
& \leq C \Vert a \Vert_\infty \Big(e^{C\sqrt{\Vert A+a\Vert_\infty}} \Big(C e^{(1+C)\sqrt{\Vert A\Vert_\infty}}\Big)^2+1\Big)\\
&\hspace{2cm}+C \Big(\Vert B\Vert_2 e^{(1+C)\sqrt{\Vert A\Vert_\infty}} + \Vert u_0,u_1\Vert_{\boldsymbol{V}}\Big) e^{C\sqrt{\Vert A\Vert_\infty}}
\end{aligned}
$$
leading to the result. 
\end{proof}

This result allows to establish the following property for the operator $K$.

\begin{lemma}\label{lem_final}
Under the assumptions done in Theorem \ref{thm_Zuazua}, let $M=M(\Vert u_0, u_1\Vert_{\boldsymbol{V}},\beta)$ be such that $K$ maps $B_\infty(0,M)$ into itself and assume that $\hat{g}^{\prime}\in L^\infty(0,M)$.
For any $\xi^i\in B_\infty(0,M)$, $i=1,2$, there exists $c(M)>0$ such that  
\begin{equation}
 \nonumber
\Vert K(\xi^2)-K(\xi^1)\Vert_{\infty}\leq c(M) \Vert \hat{g}^{\prime}\Vert_{L^\infty(0,M)} \Vert \xi^2-\xi^1\Vert_\infty.
\end{equation}
\end{lemma}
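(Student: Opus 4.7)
The plan is to apply Lemma \ref{gap} directly to the pair $(y_1,y_2):=(K(\xi^1),K(\xi^2))$. By definition, for $i=1,2$, the function $y_i=K(\xi^i)$ is the minimal-$L^2$-norm controlled solution of \eqref{NL_z} with the potential $\widehat{g}(\xi^i)$ and source term $-g(0)$, with initial data $(u_0,u_1)$ and final data $(0,0)$. So I would set
\[
A:=\widehat{g}(\xi^1), \qquad a:=\widehat{g}(\xi^2)-\widehat{g}(\xi^1), \qquad B:=-g(0),
\]
so that $A+a=\widehat{g}(\xi^2)$, and observe that $(y_1,u_1)=(y,u)$ and $(y_2,u_2)=(z,v)$ fit exactly into the framework of Lemma \ref{gap}.

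The first key estimate is the pointwise bound on $a$. Since $\xi^1,\xi^2\in B_\infty(0,M)$, their values lie in $[-M,M]$, and the mean value theorem (together with $\widehat{g}^\prime\in L^\infty(0,M)$, i.e. on the relevant interval) yields
\[
\vert a(x,t)\vert=\vert \widehat{g}(\xi^2(x,t))-\widehat{g}(\xi^1(x,t))\vert \leq \Vert \widehat{g}^\prime\Vert_{L^\infty(0,M)}\,\vert \xi^2(x,t)-\xi^1(x,t)\vert,
\]
so that $\Vert a\Vert_\infty\leq \Vert \widehat{g}^\prime\Vert_{L^\infty(0,M)}\Vert \xi^2-\xi^1\Vert_\infty$.

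The second key step is to bound the potentials $\Vert A\Vert_\infty$ and $\Vert A+a\Vert_\infty$ in terms of $M$ only. As already used in the paragraph following \eqref{estimateK}, the growth condition \eqref{asymptotic_behavior} of Theorem \ref{thm_Zuazua} provides $d>0$ such that $\Vert \widehat{g}(\xi)\Vert_\infty\leq d+\beta\ln^2(1+\Vert \xi\Vert_\infty)\leq d+\beta\ln^2(1+M)$ for every $\xi\in B_\infty(0,M)$. Hence $\max(\Vert A\Vert_\infty,\Vert A+a\Vert_\infty)\leq d+\beta\ln^2(1+M)=:\Lambda(M)$.

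Plugging all of this into the conclusion of Lemma \ref{gap} gives
\[
\Vert K(\xi^2)-K(\xi^1)\Vert_\infty=\Vert y_2-y_1\Vert_\infty \leq C^4 \Vert a\Vert_\infty\, e^{C\sqrt{\Lambda(M)}} e^{(2+3C)\sqrt{\Lambda(M)}}\Big(\vert g(0)\vert\sqrt{T\vert\Omega\vert}\, e^{(1+C)\sqrt{\Lambda(M)}}+\Vert u_0,u_1\Vert_{\boldsymbol V}\Big),
\]
and all exponential factors and the parenthetical term can be absorbed into a single constant $c(M)$ depending only on $M$, $\beta$, $g(0)$, $T$, $\Omega$ and $\Vert u_0,u_1\Vert_{\boldsymbol V}$. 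Combining with the bound on $\Vert a\Vert_\infty$ produces the announced Lipschitz estimate. The only obstacle is bookkeeping: checking that the growth condition really does convert the a priori bound $\Vert \xi^i\Vert_\infty\leq M$ into a bound on the potentials that survives the exponentials appearing in Lemma \ref{gap}; this works precisely because $\Lambda(M)$ is finite for fixed $M$, so the dependence on $\widehat{g}^\prime$ remains linear in the right-hand side.
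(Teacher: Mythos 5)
Your proposal is correct and follows essentially the same route as the paper: both apply Lemma \ref{gap} with $A=\widehat{g}(\xi^1)$, $a=\widehat{g}(\xi^2)-\widehat{g}(\xi^1)$, $B=-g(0)$, bound the resulting constant uniformly for $\xi^1,\xi^2\in B_\infty(0,M)$, and conclude via $\Vert \widehat{g}(\xi^2)-\widehat{g}(\xi^1)\Vert_\infty\leq \Vert \widehat{g}^\prime\Vert_{L^\infty(0,M)}\Vert \xi^2-\xi^1\Vert_\infty$. The only (harmless) difference is that you invoke the logarithmic growth condition to bound the potentials, whereas continuity of $\widehat{g}$ on $[-M,M]$ already suffices.
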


\begin{proof} For any $\xi^i\in B_\infty(0,M)$, $i=1,2$, let $y_{\xi^i}=K(\xi^i)$ be the null controlled solution of 
\begin{equation}
 \nonumber
\left\{
\begin{aligned}
& \partial_{tt}y_{\xi^i} - \partial_{xx}y_{\xi^i} +  y_{\xi^i} \,\widehat{g}(\xi^i)= -g(0)+f_{\xi^i} 1_{\omega} &\textrm{in}\  Q_T,\\
& y_\xi^i=0 &\textrm{on}\  \Sigma_T, \\
& (y_{\xi^i}(\cdot,0),\partial_ty_{\xi^i}(\cdot,0))=(u_0,u_1) &\textrm{in}\  \Omega,
\end{aligned}
\right.
\end{equation}
with the control $f_{\xi^i} 1_{\omega}$ of minimal $L^2(q_T)$ norm. We observe that $y_{\xi^2}$ is solution of 
\begin{equation}
 \nonumber
\left\{
\begin{aligned}
& \partial_{tt}y_{\xi^2} - \partial_{xx}y_{\xi^2} +  y_{\xi^2} \,\widehat{g}(\xi^1)+ y_{\xi^2}(\widehat{g}(\xi^2)-\widehat{g}(\xi^1))= -g(0)+f_{\xi^2} 1_{\omega} &\textrm{in}\  Q_T,\\
& y_{\xi^2}=0 &\textrm{on}\  \Sigma_T, \\
& (y_{\xi^2}(\cdot,0),\partial_t y_{\xi^2}(\cdot,0))=(u_0,u_1) &\textrm{in}\  \Omega.
\end{aligned}
\right.
\end{equation}
It follows from Lemma \ref{gap} applied with $B=-g(0)$, $A=\hat{g}(\xi^1)$, $a=\hat{g}(\xi^2)-\hat{g}(\xi^1)$, that 
\begin{equation}\label{gapxi}
\Vert y_{\xi^2}-y_{\xi^1} \Vert_{\infty}\leq A(\xi^1,\xi^2)\Vert \widehat{g}(\xi^2)-\widehat{g}(\xi^1)\Vert_\infty
\end{equation}
where the positive constant  
\begin{multline*}
A(\xi^1,\xi^2):= C^2 \Big(e^{C\sqrt{\Vert  \hat{g}(\xi^2)\Vert_\infty}} \Big(C e^{(1+C)\sqrt{\Vert  \hat{g}(\xi^1)\Vert_\infty}}\Big)^2\Big) \\
\Big(\Vert g(0)\Vert_2 e^{(1+C)\sqrt{\Vert  \hat{g}(\xi^1)\Vert_\infty}} + \Vert u_0,u_1\Vert_{\boldsymbol{V}}\Big) e^{C\sqrt{\Vert  \hat{g}(\xi^1)\Vert_\infty}}
\end{multline*}
is bounded by some $c(M)>0$ for every $\xi^i\in B_\infty(0,M)$. The result follows from \eqref{gapxi}. 
\end{proof}

\begin{remark}\label{rem_contract}
By Lemma \ref{lem_final}, if $\Vert \hat{g}^{\prime}\Vert_{L^\infty(0,M)}<  1/c(M)$ then the operator $K$ is contracting. Note however that the bound depends on the norm $\Vert u_0,u_1\Vert_{\boldsymbol{V}}$ of the initial data to be controlled.
\end{remark}

\medskip
\noindent\textbf{Acknowlegment.}
The first author warmly thanks J\'erome Lemoine (Laboratoire Math\'ematique Blaise Pascal, Clermont Auvergne University) for fruitful discussions.

{\small
\bibliographystyle{siam}
\bibliography{wavecontinuation.bib}
}

\end{document}